\newtheorem{theorem}{Theorem}[section]
\newtheorem{lemma}[theorem]{Lemma}
\theoremstyle{definition}
\theoremstyle{remark}
\newtheorem{remark}[theorem]{Remark}
\numberwithin{equation}{section}
\newcommand{\eps}{\varepsilon}
\newcommand{\R}{\mathbb R}
\newcommand{\N}{\mathbb N}
\newcommand{\cL}{\mathcal L}
\newcommand{\Lis}{\cL\mathrm{is}}
\newcommand{\identity}{\mathrm{Id}}
\DeclareMathOperator{\ran}{ran}
\DeclareMathOperator{\clos}{clos}
\newcommand{\be}{\begin{equation}}
\newcommand{\ee}{\end{equation}}
\newcommand{\tria}{{\mathcal T}}
\newcommand{\aumlaut}{{\"a}}
\newcommand{\uumlaut}{{\"u}}
\title[Stability of Galerkin discretizations of parabolic evolution equations]{Stability of Galerkin discretizations of a mixed space-time variational formulation of parabolic evolution equations}
\date{\today}
\author{Rob Stevenson and Jan Westerdiep}
\address{
Korteweg-de Vries (KdV) Institute for Mathematics, University of Amsterdam, P.O. Box 94248, 1090 GE Amsterdam, The Netherlands.
}
\email{r.p.stevenson@uva.nl, j.h.westerdiep@uva.nl}
\thanks{The first author has been supported by NSF Grant DMS 172029.
The second author has been supported by the Netherlands Organization for Scientific Research
(NWO) under contract.~no.~613.001.652}
\subjclass[2010]{
35K20, 
41A25, 
65M12, 
65M15, 
65M60.
}
\keywords{Parabolic PDEs, space-time variational formulations, quasi-best approximations, stability}
\begin{document}

\begin{abstract}  
We analyze Galerkin discretizations of a new well-posed mixed space-time variational formulation of parabolic PDEs. For suitable pairs of finite element trial spaces, the resulting Galerkin operators are shown to be uniformly stable.
The method is compared to two related space-time discretization methods introduced in 
[\emph{IMA J. Numer. Anal.}, 33(1) (2013), pp. 242-260] by R. Andreev
and in [\emph{Comput. Methods Appl. Math.}, 15(4) (2015), pp. 551-566] by O.~Steinbach.
\end{abstract}

\maketitle

\section{Introduction}
In recent years one witnesses a rapidly growing interest in simultaneous space-time methods for solving parabolic evolution equations originally introduced in \cite{18.63,18.64}, see e.g.~\cite{77.5,11,299,249.2,75.27,169.05,247.155,64.577,234.7,243.867,310.6,249.3,75.257}.
Compared to classical time marching methods, space-time methods 
are much better suited for a massively parallel implementation, and have the potential to drive adaptivity simultaneously in space and time.

Apart from the first order system least squares formulation recently introduced in \cite{75.257}, the known well-posed simultaneous space-time variational formulations of parabolic equations in terms of partial differential operators only, so not involving non-local operators, are not coercive.
As a consequence, it is non-trivial to find families of pairs of discrete trial- and test-spaces for which the resulting Petrov-Galerkin discretizations are uniformly stable. The latter is a sufficient and, as we will see, necessary condition for the Petrov-Galerkin approximations to be {\em quasi-optimal}, i.e., to yield an up to a constant factor best approximation to the solution from the trial space.
This concept has to be contrasted to rate optimality that, for quasi-uniform temporal and spatial partitions, has been shown for any reasonable numerical scheme under the assumption of sufficient regularity of the solution.

If one allows different spatial meshes at different times, then for the classical time marching schemes quasi-optimality of the numerical approximations is known not to be guaranteed as demonstrated in \cite[Sect.~4]{70.1}.

In view of the difficulty in constructing stable pairs of trial- and test-spaces,  
 in \cite{11} Andreev considered minimal residual Petrov-Galerkin discretizations.
They have an equivalent interpretation as Galerkin discretizations of an extended self-adjoint mixed system, with the Riesz lift of the residual of the primal variable being the secondary variable.
This is the point of view we will take.

A different path was followed by Steinbach in \cite{249.2}. 
Assuming a homogenous initial condition, for equal test and trial finite element spaces w.r.t.~fully
general finite element meshes, there stability was shown w.r.t.~a weaker mesh-dependent norm on the trial space. 
As we will see, however, this has the consequence that for some solutions of the parabolic problem these Galerkin approximations are far from being quasi-optimal w.r.t.~the natural mesh-independent norm on the trial space.

In the current work, we modify Andreev's approach by considering an equivalent but simpler mixed system that we construct from a space-time variational formulation that follows from applying the Br\'{e}zis-Ekeland-Nayroles principle \cite{35.81,233.5}.
With the same trial space for the primal variable, we show stability of the Galerkin discretization of this mixed system whilst utilizing a smaller trial space for the secondary variable.
In addition, the stiffness matrix resulting from this mixed system is more sparse.
In our numerical experiments the errors in the Galerkin solutions are nevertheless very comparable.

\subsection{Organization} In Sect.~\ref{S2} we derive the two self-adjoint mixed system formulations of the parabolic problem that are central in this work.
In Sect.~\ref{S3} we give sufficient conditions for stability of Galerkin discretizations for both systems.
We provide an a priori error bound for the Galerkin discretization of the newly introduced system, and improved 
a priori error bounds for the methods from \cite{11} and \cite{249.2}.
In Sect~\ref{S4}, we show that the crucial condition for stability (being the only condition for the newly introduced mixed system) is satisfied for prismatic space-time finite elements
whenever the generally non-uniform partition in time is independent of the spatial location, and the generally non-uniform spatial mesh in each time slab is such that the corresponding $L_2$-orthogonal projection is uniformly $H^1$-stable.
 In Sect.~\ref{Snum} we present some first simple numerical experiments for a one-dimensional spatial domain and uniform meshes.
Conclusions are presented in Sect.~\ref{Sconcl}.

\subsection{Notations}
In this work, by $C \lesssim D$ we will mean that $C$ can be bounded by a multiple of $D$, independently of parameters which C and D may depend on. Obviously, $C \gtrsim D$ is defined as $D \lesssim C$, and $C\eqsim D$ as $C\lesssim D$ and $C \gtrsim D$.

For normed linear spaces $E$ and $F$, by $\cL(E,F)$ we will denote the normed linear space of bounded linear mappings $E \rightarrow F$, 
and by $\Lis(E,F)$ its subset of boundedly invertible linear mappings $E \rightarrow F$.
We write $E \hookrightarrow F$ to denote that $E$ is continuously embedded into $F$.
For simplicity only, we exclusively consider linear spaces over the scalar field $\R$.

For linear spaces $E$ and $F$, sequences $\Phi=(\phi_j)_{j \in J} \subset E$, $\Psi=(\psi_i)_{i \in I} \subset F$, $f \in F^*$, and  a linear $A\colon E \rightarrow F^*$, 
we define the column vector $f(\Psi):=[f(\psi_i)]_{i \in I}$ and matrix $(A \Phi)(\Psi):=[(A \phi_j)(\psi_i)]_{i \in I,j \in J}$.
If $E=F$ is an inner product space, then with $R\colon E \rightarrow E'$ denoting the Riesz map, we set $\langle\Psi,\Phi\rangle:=(R \Phi)(\Psi)=[(R \phi_j)(\psi_i)]_{i \in I,j \in J}=
[\langle \psi_i,\phi_j\rangle]_{i \in I,j \in J}$.

\section{Space-time formulations of the parabolic evolution problem} \label{S2}
Let $V,H$ be separable Hilbert spaces of functions on some ``spatial domain'' 
such that $V \hookrightarrow H$ with dense and compact embedding. 
Identifying $H$ with its dual, we obtain the Gelfand triple 
$V \hookrightarrow H \simeq H' \hookrightarrow V'$. 

We use the notation 
$\langle \cdot,\cdot \rangle$ to denote both the scalar product on 
$H \times H$, and its unique extension by continuity 
to the duality pairing on $V' \times V$.
Correspondingly, the norm on $H$ will  be denoted by $\|\,\|$.

For a.e. 
$$
t \in I:=(0,T),
$$
let $a(t;\cdot,\cdot)$ denote a bilinear form on $V \times V$ such that for 
any $\eta,\zeta \in V$, $t \mapsto a(t;\eta,\zeta)$ is measurable on $I$, 
and such that for 
a.e. $t\in I$,
\begin{alignat}{3} \label{1}
|a(t;\eta,\zeta)| 
& \lesssim  \|\eta\|_{V} \|\zeta\|_{V} \quad &&(\eta,\zeta \in V) \quad &&\text{({\em boundedness})},
\\ \label{2}
 a(t;\eta,\eta)  &\gtrsim \|\eta\|_{V}^2 \quad 
&&(\eta \in {V}) \quad &&\text{({\em coercivity})}.
\end{alignat}

With $A(t) \in \Lis({V},V')$ being defined by $ (A(t) \eta)(\zeta)=a(t;\eta,\zeta)$, we are interested in solving the {\em parabolic initial value problem} to finding $u$ such that
\begin{equation} \label{11}
\left\{
\begin{array}{rl} 
\frac{d u}{d t}(t) +A(t) u(t)&\!\!\!= g(t) \quad(t \in I),\\
u(0) &\!\!\!= u_0.
\end{array}
\right.
\end{equation}

\begin{remark} With $\tilde{u}(t):=u(t) e^{-\varrho t}$, \eqref{11} is equivalent to $
\frac{d \tilde{u}}{d t}(t) +(A(t)+\varrho \identity) \tilde{u}(t)= g(t)e^{-\varrho t}$  ($t \in I$),
$\tilde{u}(0) = u_0$. So if initially $a(t;\eta,\eta)$ is not coercive but only satisfies a {\em G{\aa}rding inequality}
$ a(t;\eta,\eta) + \varrho \langle \eta,\eta\rangle  \gtrsim \|\eta\|_{V}^2$ ($\eta \in {V}$), then one can consider a transformed problem such that \eqref{2} is valid.
\end{remark}

In a simultaneous space-time variational formulation, the parabolic PDE reads as finding $u$ from a suitable space of functions of time and space such that
\begin{equation} \label{8}
(Bw)(v):=\int_I 
\langle { \textstyle \frac{d w}{dt}}(t), v(t)\rangle + 
a(t;w(t),v(t)) dt = \int_I 
\langle g(t), v(t)\rangle =:g(v)
\end{equation}
for all $v$ from another suitable space of functions of time and space.
One  possibility to enforce the initial condition is by testing it against additional test functions. A proof of the following result can be found in \cite{247.15}, cf. 
\cite[Ch.XVIII, \S3]{63} and \cite[Ch.~IV, \S26]{314.9} for slightly different statements.

\begin{theorem} \label{thm0} With $X:=L_2(I;{V}) \cap H^1(I;V')$, $Y:=L_2(I;{V})$,
under conditions \eqref{1} and \eqref{2} it holds that
\be \label{var1}
\left[\begin{array}{@{}c@{}} B \\ \gamma_0\end{array} \right]\in \Lis(X,Y' \times H),
\ee
where for $t \in \bar{I}$, $\gamma_t\colon u \mapsto u(t,\cdot)$ denotes the trace map.
That is, assuming $g \in Y'$ and $u_0 \in H$, finding $u \in X$ such that 
\be \label{exact}
(Bu)(v_1)+\langle u(0,\cdot),v_2\rangle=g(v_1)+\langle u_0,v_2\rangle\quad ((v_1,v_2) \in Y \times H),
\ee
is a well-posed variational formulation of \eqref{11}.
\end{theorem}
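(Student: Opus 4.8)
The plan is to establish \eqref{var1} by verifying the hypotheses of the Banach--Ne\v{c}as--Babu\v{s}ka theorem (equivalently, by exhibiting bounded invertibility directly): boundedness of the operator $\left[\begin{smallmatrix} B \\ \gamma_0\end{smallmatrix}\right]$ from $X$ to $Y'\times H$, injectivity, surjectivity, and an inf-sup lower bound. Throughout I would equip $X$ with the graph norm $\|w\|_X^2 = \|w\|_{L_2(I;V)}^2 + \|\tfrac{dw}{dt}\|_{L_2(I;V')}^2$ and $Y$ with $\|v\|_Y = \|v\|_{L_2(I;V)}$. The first preparatory step is the standard embedding $X \hookrightarrow C(\bar I; H)$, which makes the trace maps $\gamma_t$ well-defined and bounded on $X$ for every $t\in\bar I$; this rests on the identity $\frac{d}{dt}\|w(t)\|^2 = 2\langle \tfrac{dw}{dt}(t), w(t)\rangle$ valid for $w\in X$, integrated over $I$.

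Boundedness of $B\colon X\to Y'$ is immediate from \eqref{1} and Cauchy--Schwarz, and boundedness of $\gamma_0$ from the embedding just mentioned; together these give the ``$\lesssim$'' half and show the operator lies in $\cL(X, Y'\times H)$. For the ``$\gtrsim$'' half I would prove the inf-sup condition
\be
\inf_{0\neq w\in X}\ \sup_{0\neq (v_1,v_2)\in Y\times H} \frac{(Bw)(v_1) + \langle \gamma_0 w, v_2\rangle}{\|w\|_X\,\|(v_1,v_2)\|_{Y\times H}} \ \gtrsim\ 1.
\ee
The natural route is, given $w\in X$, to choose the test pair that simultaneously ``sees'' the $L_2(I;V)$-content, the $H^1(I;V')$-content, and the initial value of $w$. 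A convenient choice is $v_2 := \gamma_0 w \in H$ together with $v_1 := w + z$, where $z\in Y$ solves the elliptic-in-space auxiliary problem $A(t)^{-1}$-type correction, i.e. $\langle z(t),\cdot\rangle_V$ matched to $\tfrac{dw}{dt}(t)\in V'$ so that $a(t;z(t),\cdot)$ reproduces the $V'$-dual pairing; one then uses coercivity \eqref{2} to bound $\|z(t)\|_V \eqsim \|\tfrac{dw}{dt}(t)\|_{V'}$. Expanding $(Bw)(v_1)+\langle\gamma_0 w,v_2\rangle$ with this choice, the term $\int_I \langle \tfrac{dw}{dt}, w\rangle\,dt = \tfrac12(\|\gamma_T w\|^2 - \|\gamma_0 w\|^2)$ telescopes against $\|v_2\|^2 = \|\gamma_0 w\|^2$, the term $\int_I a(t;w,w)\,dt \gtrsim \|w\|_{L_2(I;V)}^2$ by coercivity, and the cross terms involving $z$ deliver, again via coercivity, control of $\|\tfrac{dw}{dt}\|_{L_2(I;V')}^2$, while the remaining cross terms are absorbed by Young's inequality using boundedness \eqref{1}. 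One checks $\|v_1\|_Y + \|v_2\|_H \lesssim \|w\|_X$, so the quotient is bounded below by a fixed constant.

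Injectivity then follows from the inf-sup bound, and it remains to show surjectivity onto $Y'\times H$ (equivalently, density of the range, since the range is already closed by the lower bound). Here I would invoke the classical existence theory for the parabolic problem: given $(g,u_0)\in Y'\times H$, the Galerkin/Lions construction (or the cited results from \cite{247.15, 63, 314.9}) produces $u\in X$ solving \eqref{exact}, which exhibits the desired preimage; alternatively one runs the transposed inf-sup argument on $\left[\begin{smallmatrix}B\\\gamma_0\end{smallmatrix}\right]^*$. The main obstacle is the inf-sup step: the difficulty is that $B$ is not coercive on $X$ — the time-derivative part is skew and only the spatial part is coercive — so the test function must be built as a genuine combination of $w$ itself and a spatially-elliptic lift of $\tfrac{dw}{dt}$, and one must verify carefully that the favorable (telescoping and coercive) contributions dominate all indefinite cross terms uniformly in $w$, using only \eqref{1}, \eqref{2}, and the Gelfand-triple structure. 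Since this is exactly the content of the cited references, in the paper I would simply point to \cite{247.15} for the proof and record the precise norm equivalences we need downstream.
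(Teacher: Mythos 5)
The paper does not prove Theorem~\ref{thm0}: it cites \cite{247.15} for the proof (cf.~\cite{63,314.9}) and records only the one ingredient it later uses, the embedding $X\hookrightarrow C(\bar I;H)$, which you also identify and derive. Your sketch of the Banach--Ne\v{c}as--Babu\v{s}ka argument --- boundedness, an inf-sup lower bound via the test pair $v_1=w+z$ with $z(t)=A(t)^{-1}\tfrac{dw}{dt}(t)$ and $v_2=\gamma_0 w$, and surjectivity via Lions' existence theory or adjoint injectivity --- is indeed the structure of the proof in the cited reference, so the ``sketch then cite'' plan is consistent with the paper's treatment.

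One step should be tightened. With the unscaled choice $v_1=w+z$, the coercive contributions are $\int_I a(t;w,w)\,dt\geq\alpha\|w\|_Y^2$ and $\int_I\langle\tfrac{dw}{dt},z\rangle\,dt=\int_I a(t;z,z)\,dt\geq\alpha\|z\|_Y^2$, but the cross term $\int_I a(t;w,z)\,dt$ is only bounded by $M\|w\|_Y\|z\|_Y$ with $M$ the continuity constant from \eqref{1}. If $M>2\alpha$ the quadratic form $\alpha\|w\|_Y^2+\alpha\|z\|_Y^2-M\|w\|_Y\|z\|_Y$ is indefinite and Young's inequality alone cannot rescue it. The standard remedy is to take $v_1=w+\lambda z$ with $\lambda>0$ small (of order $\alpha^2/M^2$), which degrades the inf-sup constant but keeps it uniformly positive. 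You gesture at ``absorbed by Young'' without introducing the scaling; since \eqref{1}--\eqref{2} do not assume symmetry of $a$, the scaling is genuinely needed, and without it the argument as written can fail. With this adjustment the proposal is correct.
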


One ingredient of the proof of this theorem is the continuity of the embedding $X \hookrightarrow C(\bar{I},H)$, in particular implying that for any $t \in \bar{I}$, $\gamma_t \in \cL(X,H)$.

Defining $A, A_s \in \Lis(Y,Y')$ (here \eqref{2} is used), $A_a \in \cL(Y,Y')$, and $C, \partial_t \in \cL(X,Y')$ by 
\begin{align*}
&(Au)(v):=\int_I a(t;u(t),v(t))\,dt,\quad
A_s:={\textstyle  \frac12}(A+A'), \quad A_a:={\textstyle \frac12}(A-A'),\\
&C:=B-A_s,\quad \partial_t:=B-A,
\end{align*}
an equivalent well-posed variational formulation of the parabolic PDE is obtained by applying the so-called Br\'{e}zis-Ekeland-Nayroles variational principle \cite{35.81,233.5}, cf. also \cite[\S3.2.4]{10.6}.
It reads as 
\be \label{var2}
(C' A_s^{-1} C+A_s+\gamma_T' \gamma_T)u=(\identity +C' A_s^{-1})g +\gamma_0' u_0,
\ee
where the operator at the left hand side is in $\Lis(X,X')$, is self-adjoint and coercive. 

We provide a direct proof of these facts.
Since $\left[\begin{array}{@{}cc@{}} A_s & 0 \\ 0 & \identity \end{array}\right] \in \Lis(Y\times H,Y'\times H)$, an equivalent formulation of \eqref{var1} as a self-adjoint saddle point equation reads as finding $(\mu,\sigma,u) \in Y\times H\times X$ (where $\mu$ and $\sigma$ will be zero) such that
\begin{align} \label{var4}
\left[\begin{array}{@{}ccc@{}} A_s & 0 & B\\ 0 & \identity & \gamma_0\\ B' & \gamma_0' & 0\end{array}\right]
\left[\begin{array}{@{}c@{}} \mu \\ \sigma \\ u \end{array}\right]&=
\left[\begin{array}{@{}c@{}} g \\ u_0 \\ 0 \end{array}\right],
\intertext{or} \label{var5}
(B' A_s^{-1} B+\gamma_0'\gamma_0)u&=B' A_s^{-1}g+\gamma_0' u_0.
\end{align}
Thanks to \eqref{var1}, this Schur complement $B' A_s^{-1} B+\gamma_0'\gamma_0$ is in $\Lis(X,X')$, is self-adjoint and coercive.

We show that \eqref{var5} and \eqref{var2} are equal. Recalling the definitions of $C$ and $\partial_t$, note that the right-hand sides of both equations are the same, and that
$$
B' A_s^{-1} B+\gamma_0'\gamma_0=C' A_s^{-1} C+A_s+C+C'+\gamma_0'\gamma_0=C' A_s^{-1} C+A_s+\partial_t+\partial_t'+\gamma_0'\gamma_0
$$
thanks to $A_a'=-A_a$.
The proof of our claim is completed by noting that for $w,v \in X$,
\begin{align*}
((\partial_t+\partial_t'+\gamma_0'\gamma_0)w)(v)&=
\int_I \langle { \textstyle \frac{d w}{dt}}(t), v(t)\rangle+\langle w(t), { \textstyle \frac{d v}{dt}}(t)\rangle\,dt+\langle w(0), v(0)\rangle\\
&=
\int_I {\textstyle \frac{d}{dt}} \langle w(t),v(t)\rangle\,dt+\langle w(0),v(0)\rangle=(\gamma_T'\gamma_Tw)(v).
\end{align*}

As \eqref{var5} was obtained as the Schur complement equation of \eqref{var4}, in its form \eqref{var2} it is naturally obtained as the Schur complement of 
 the problem of finding $(\lambda,u)\in Y \times X$ such that
 \be \label{var3}
 \left[\begin{array}{@{}cc@{}} A_s & C \\ C'  & -(A_s+\gamma_T' \gamma_T) \end{array}\right]
 \left[\begin{array}{@{}c@{}} \lambda \\ u \end{array}\right]=
 \left[\begin{array}{@{}c@{}} g \\ -(g+\gamma_0' u_0) \end{array}\right].
\ee
Knowing that its Schur complement is in $\Lis(X,X')$, $A_s \in \Lis(Y,Y')$, and $C \in \cL(X,Y')$, 
we infer that the self-adjoint operator at the left hand side of \eqref{var3} is in $\Lis(Y\times X,Y'\times X')$.

Substituting $C=B-A_s$ and $Bu=g$, we find that the secondary variable satisfies
$$
\lambda=u.
$$

\begin{remark} When reading $\gamma_T' \gamma_T$ as $\partial_t+\partial_t'+\gamma_0'\gamma_0$, the 
 system \eqref{var3} has remarkable similarities to a certain preconditioned version presented in \cite{234.7} of a discretized parabolic PDE using the implicit Euler method in time.
 Ideas concerning optimal preconditioning developed in that paper, as well as those in \cite{12.5},
can be expected to be applicable to Galerkin discretizations of \eqref{var3}.
\end{remark}

\begin{remark} In equations \eqref{var4} and \eqref{var5}, the operator $A_s$ can be replaced by a general self-adjoint $\tilde{A}_s \in \Lis(Y,Y')$. With $\tilde{C}:=B-\tilde{A}_s$, the equivalent equation \eqref{var2} then reads as
$$
(\tilde{C}' \tilde{A}_s^{-1} \tilde{C}+2 A_s-\tilde{A}_s+\gamma_T' \gamma_T)u=(\identity +\tilde{C}' \tilde{A}_s^{-1})g +\gamma_0' u_0,
$$
and \eqref{var3} as
$$
 \left[\begin{array}{@{}cc@{}} \tilde{A}_s & \tilde{C} \\ \tilde{C}'  & -(2 A_s-\tilde{A}_s+\gamma_T' \gamma_T) \end{array}\right]
 \left[\begin{array}{@{}c@{}} \lambda \\ u \end{array}\right]=
 \left[\begin{array}{@{}c@{}} g \\ -(g+\gamma_0' u_0) \end{array}\right],
$$
with solution $\lambda=u$.
\end{remark}

In the next section, we study Galerkin discretizations of equations \eqref{var4} and \eqref{var3}, which then are no longer equivalent.

Since the secondary variables $\mu$ and $\sigma$ in \eqref{var4} are zero, the subspaces for their approximation do not have to satisfy any approximation properties.
Since the secondary variable $\lambda$ in \eqref{var3} is non-zero, the subspace of $Y$ for its approximation has to satisfy approximation properties, and the error in its best approximation enters the upper bound for the error in the primal variable $u$.

On the other hand, (uniform) stability will be easier to realize with equation \eqref{var3} and will also be proven to hold true for $A_a \neq 0$; the system matrix will be more sparse; and the number of unknowns will be smaller.

In order to facilitate the derivation of some quantitative results, we will equip the spaces $Y$ and $X$ with the `energy-norms' defined by
$$
\|v\|^2_Y:=(A_s v)(v),\quad
\|u\|_{X}^2:=\|u\|_{Y}^2 +\|\partial_t u\|_{Y'}^2  + \|u(T)\|^2,
$$
which are equivalent to the standard norms on these spaces.
Correspondingly, orthogonality in $Y$ will be interpreted w.r.t.~the `energy scalar product' $(A_s\cdot)(\cdot)$.

\section{Stable discretizations of the parabolic problem} \label{S3}
\subsection{Uniformly stable (Petrov-) Galerkin discretizations and quasi-optimal approximations} \label{Sgeneral}
This subsection is devoted to proving the following theorem.

\begin{theorem}
Let $W$ and $Z$ be Hilbert spaces, and $F \in \Lis(Z,W')$.
Let $(W^\delta,Z^\delta)_{\delta \in \Delta}$ be a family of closed subspaces of $W \times Z$ such that for each $\delta \in \Delta$ it holds that 
${E_W^\delta}' F E_Z^\delta \in \Lis(Z^\delta,{W^\delta}')$, where $E_W^\delta\colon W^\delta \rightarrow W$, $E_Z^\delta\colon Z^\delta \rightarrow Z$ denote the trivial embeddings.
Then the collection $(z^\delta)_{\delta \in \Delta}$ of Petrov-Galerkin approximations to $z \in Z$, determined by
${E_W^\delta}' F E_Z^\delta z^\delta={E_W^\delta}' F z$, is \emph{quasi-optimal}, i.e. $\|z-z^\delta\|_Z \lesssim \inf_{0 \neq \bar{z}^\delta \in Z} \|z-\bar{z}^\delta \|_Z$, 
uniformly in $z \in Z$ and $\delta \in \Delta$, if and only if 
$$
\inf_{\delta \in \Delta}\inf_{0 \neq z \in Z^\delta} \sup_{0 \neq w \in W^\delta} \frac{|(Fz)(w)|}{\|z\|_Z \|w\|_W}>0\qquad\text{\emph{(uniform stability)}}.
$$
\end{theorem}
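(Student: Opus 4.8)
The statement is a classical equivalence between uniform inf-sup stability and uniform quasi-optimality of Petrov-Galerkin schemes (essentially the ``Babuška theorem plus its converse''), so the plan is to prove the two implications separately and to identify the relevant constants carefully so that the quantifiers over $\delta$ come out uniform.

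\emph{Stability implies quasi-optimality.} Write $F^\delta := {E_W^\delta}' F E_Z^\delta$ and let $\gamma_\delta$ denote the inf-sup constant $\inf_{0\neq z\in Z^\delta}\sup_{0\neq w\in W^\delta}\frac{|(Fz)(w)|}{\|z\|_Z\|w\|_W}$, with $\gamma:=\inf_\delta\gamma_\delta>0$ by hypothesis. First I would record that $F^\delta\in\Lis(Z^\delta,{W^\delta}')$ together with $\gamma_\delta>0$ gives $\|(F^\delta)^{-1}\|_{\cL({W^\delta}',Z^\delta)}\le\gamma_\delta^{-1}$; on finite-dimensional-like subspaces this is automatic, but in the Hilbert-space setting one uses that a bounded below operator between Hilbert spaces with the stated bijectivity has its inverse controlled by the inf-sup constant. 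Then the Galerkin problem ${E_W^\delta}'FE_Z^\delta z^\delta={E_W^\delta}'Fz$ is uniquely solvable, and for any $\bar z^\delta\in Z^\delta$ the Galerkin orthogonality ${E_W^\delta}'F(z^\delta-\bar z^\delta)={E_W^\delta}'F(z-\bar z^\delta)$ yields
$$
\|z^\delta-\bar z^\delta\|_Z\le\gamma_\delta^{-1}\sup_{0\neq w\in W^\delta}\frac{|(F(z-\bar z^\delta))(w)|}{\|w\|_W}\le\gamma_\delta^{-1}\|F\|_{\cL(Z,W')}\|z-\bar z^\delta\|_Z.
$$
A triangle inequality then gives $\|z-z^\delta\|_Z\le(1+\gamma^{-1}\|F\|)\inf_{\bar z^\delta\in Z^\delta}\|z-\bar z^\delta\|_Z$, uniformly in $z$ and $\delta$, which is the asserted quasi-optimality.

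\emph{Quasi-optimality implies stability.} This is the converse, and is the step I expect to require the most care. Assume there is a constant $\Lambda$ with $\|z-z^\delta\|_Z\le\Lambda\inf_{\bar z^\delta\in Z^\delta}\|z-\bar z^\delta\|_Z$ uniformly. The key observation is that the solution map $z\mapsto z^\delta$ is a linear projection onto $Z^\delta$: if $z\in Z^\delta$ then $z^\delta=z$ (by well-posedness of the Galerkin problem for data coming from $z\in Z^\delta$ — here one needs that $F^\delta$ is injective, which follows from quasi-optimality applied to $z\in Z^\delta$ since then the best error is $0$). Hence $G^\delta\colon z\mapsto z^\delta$ is an idempotent, and quasi-optimality says precisely $\|z-G^\delta z\|_Z\le\Lambda\,\mathrm{dist}(z,Z^\delta)=\Lambda\|z-G^\delta z\|_Z$ for the \emph{orthogonal} projection; more usefully, $\|I-G^\delta\|_{\cL(Z,Z)}\le\Lambda$, hence $\|G^\delta\|_{\cL(Z,Z)}\le1+\Lambda$ (or one uses the Kato identity $\|G^\delta\|=\|I-G^\delta\|$ for a non-trivial idempotent). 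Now for $0\neq z\in Z^\delta$ I would test: since $F^\delta z={E_W^\delta}'Fz_{\mathrm{ext}}$ where one wants to realize $F^\delta z$ as the Galerkin data of some $\tilde z\in Z$; concretely, take $w^\delta\in W^\delta$ and relate $\sup_{w\in W^\delta}\frac{|(Fz)(w)|}{\|w\|_W}=\|F^\delta z\|_{{W^\delta}'}$. Define $\tilde z:=F^{-1}E_W^\delta F^\delta z\in Z$ (using $F\in\Lis(Z,W')$ and extending $F^\delta z\in{W^\delta}'$ arbitrarily, e.g. by the adjoint embedding, to an element of $W'$); then $\tilde z^\delta=G^\delta\tilde z$ solves the same Galerkin equation as $z$, so $\tilde z^\delta=z$, whence
$$
\|z\|_Z=\|G^\delta\tilde z\|_Z\le(1+\Lambda)\|\tilde z\|_Z\le(1+\Lambda)\|F^{-1}\|_{\cL(W',Z)}\|F^\delta z\|_{{W^\delta}'}=(1+\Lambda)\|F^{-1}\|\sup_{0\neq w\in W^\delta}\frac{|(Fz)(w)|}{\|w\|_W}.
$$
Dividing by $\|z\|_Z$ gives the uniform inf-sup bound with $\gamma\ge\big((1+\Lambda)\|F^{-1}\|\big)^{-1}$.

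The main obstacle is the bookkeeping in the converse direction: one must be scrupulous about the difference between $F^\delta z\in{W^\delta}'$ and its (non-canonical) extension to $W'$, and about the fact that the ``best approximation'' in the quasi-optimality hypothesis is with respect to $\|\cdot\|_Z$, not an energy inner product, so $G^\delta$ need not be an orthogonal projection — hence the appeal to $\|G^\delta\|\le 1+\Lambda$ rather than $\|G^\delta\|=1$. Everything else (the forward implication, linearity and idempotency of $G^\delta$) is routine, and the uniformity over $\delta$ is immediate once the constants are tracked, since they depend only on $\|F\|$, $\|F^{-1}\|$, and the single constant $\gamma$ or $\Lambda$.
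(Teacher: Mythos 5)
Your proof is correct, but the route is genuinely different from the paper's, so a brief comparison is in order. The paper identifies the solution map $P^\delta = E_Z^\delta({E_W^\delta}'FE_Z^\delta)^{-1}{E_W^\delta}'F$ and, via the Kato/Xu--Zikatanov identity $\|I-P^\delta\|=\|P^\delta\|$ for nontrivial idempotents, shows that the quasi-optimality constant is \emph{exactly} $\|P^\delta\|_{\cL(Z,Z)}$; it then sandwiches $\|P^\delta\|$ between $\|F^{-1}\|^{-1}\gamma_\delta^{-1}$ and $\|F\|\gamma_\delta^{-1}$, so that both implications and the sharp quantitative estimate of Remark~\ref{remmie} fall out of a single chain of (in)equalities. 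You instead prove the two implications separately: the forward direction by the classical C\'ea--Babu\v ska argument (Galerkin orthogonality, discrete inf-sup, triangle inequality, giving the constant $1+\gamma^{-1}\|F\|$), and the converse by the explicit construction $\tilde z:=F^{-1}f$ with $f\in W'$ the norm-preserving extension of $F^\delta z\in{W^\delta}'$, noting $P^\delta\tilde z=z$ and deducing $\gamma_\delta^{-1}\le(1+\Lambda)\|F^{-1}\|$. One small notational slip in the converse: $E_W^\delta F^\delta z$ does not typecheck, since $E_W^\delta\colon W^\delta\to W$ while $F^\delta z\in{W^\delta}'$; the operator you mean is the adjoint of the orthogonal projection onto $W^\delta$, i.e.\ extension by zero on $(W^\delta)^\perp$, which is precisely the device the paper uses to prove $\|E_Z^\delta(F^\delta)^{-1}{E_W^\delta}'\|_{\cL(W',Z)}=\|(F^\delta)^{-1}\|_{\cL({W^\delta}',Z^\delta)}$, so your instinct is right. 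What the paper's route buys is slightly sharper constants (no ``$+1$'' on either side, because Kato's identity replaces the triangle inequality) and a single unifying computation that also yields Remark~\ref{remmie} for free; what yours buys is a more elementary, textbook-style presentation in which the Kato identity is never actually needed (you only cite it as an optional shortcut for $\|G^\delta\|\le 1+\Lambda$). Both arguments rely in the same essential way on the Hilbert-space structure in the converse direction, via the norm-preserving extension of a functional on $W^\delta$ to one on $W$.
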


\begin{proof}
The mapping $P^\delta:=z \mapsto  z^\delta=E_Z^\delta ({E_W^\delta}' F E_Z^\delta)^{-1} {E_W^\delta}' F z$ is a projector.
For $\{0\} \subsetneq Z^\delta \subsetneq Z$, it holds that $P^\delta \not\in \{0,\identity\}$, and consequently, $\|\identity -P^\delta\|_{\cL(Z,Z)}=\|P^\delta\|_{\cL(Z,Z)}$ (see \cite{169.5,315.7}).
We obtain that 
\be \label{stablePG1}
\begin{split}
&\sup_{z \in Z\setminus Z^\delta}\frac{\|z-z^\delta\|_Z}{\inf_{\bar{z}^\delta \in Z^\delta}\|z-\bar{z}^\delta\|_Z}=
\sup_{z \in Z\setminus Z^\delta}\sup_{\bar{z}^\delta \in Z^\delta}\frac{\|(I-P^\delta)z\|_Z}{\|z-\bar{z}^\delta\|_Z}\\
&=
\sup_{0 \neq \bar{z} \in Z}\frac{\|(I-P^\delta)\bar{z}\|_Z}{\|\bar{z}\|_Z}=\|P^\delta\|_{\cL(Z,Z)}.
\end{split}
\ee
It remains to show uniform boundedness of $\|P^\delta\|_{\cL(Z,Z)}$ if and only if  uniform stability is valid.

The definition of $P^\delta$ shows that 
$$
 \|F^{-1}\|_{\cL(W',Z)}^{-1} \leq \frac{\|P^\delta\|_{\cL(Z,Z)}}{\|E_Z^\delta ({E_W^\delta}' F E_Z^\delta)^{-1} {E_W^\delta}'\|_{\cL(W',Z)}} \leq  \|F\|_{\cL(Z,W')}.
$$
Further, we have that
$$
\|E_Z^\delta ({E_W^\delta}' F E_Z^\delta)^{-1} {E_W^\delta}'\|_{\cL(W',Z)}=\| ({E_W^\delta}' F E_Z^\delta)^{-1} {E_W^\delta}'\|_{\cL(W',Z^\delta)}=\| ({E_W^\delta}' F E_Z^\delta)^{-1}\|_{\cL({W^\delta}',Z^\delta)}
$$
where the last equality follows from $\|{E_W^\delta}'\|_{\cL(W',{W^\delta}')} \leq 1$ and, for the other direction, from the fact that for given 
$f^\delta \in {W^\delta}'$ the function $f \in W'$ defined by $f|_{W^\delta}:=f^\delta$ and $f|_{(W^\delta)^\perp}:=0$ satisfies $\|f\|_{W'}=\|f^\delta\|_{{W^\delta}'}$ and $f^\delta={E_W^\delta}' f$.

 The proof is completed by
\be \label{stablePG3}
\|({E_W^\delta}' F E_Z^\delta)^{-1}\|_{\cL({W^\delta}',Z^\delta)}^{-1}=\inf_{0 \neq z \in Z^\delta} \sup_{0 \neq w \in W^\delta} \frac{|(Fz)(w)|}{\|z\|_Z \|w\|_W}. \qedhere
\ee
\end{proof}

\begin{remark} \label{remmie} In particular above analysis provides a short self-contained proof of the quantitative results
$$ \|F^{-1}\|_{\cL(W',Z)}^{-1} \leq \frac{\sup_{z \in Z\setminus Z^\delta}\frac{\|z-z^\delta\|_Z}{\inf_{\bar{z}^\delta \in Z^\delta}\|z-\bar{z}^\delta\|_Z}}
{\inf_{0 \neq z \in Z^\delta} \sup_{0 \neq w \in W^\delta} \frac{|(Fz)(w)|}{\|z\|_Z \|w\|_W}} \leq \|F\|_{\cL(Z,W')},
$$
that were established earlier in \cite[\S2.1, in particular  (2.12)]{258.4}.
\end{remark}

\subsection{Uniformly stable Galerkin discretizations of \eqref{var3}} \label{Sstablevar3}
Let $Y^\delta \times X^\delta$ be a closed subspace of $Y \times X$, and let
$E_Y^\delta\colon Y^\delta \rightarrow Y$ and  $E_X^\delta\colon X^\delta \rightarrow X$ denote the trivial embeddings.
Since ${E^\delta_Y}'A_s E^\delta_Y \in \Lis(Y^\delta,{Y^\delta}')$ (as well as being an isometry), 
the Galerkin operator resulting from \eqref{var3} can be factorized as
\be \label{factorized}
\begin{split}
&\left[\begin{array}{@{}cc@{}} {E^\delta_Y}'A_s E^\delta_Y & {E^\delta_Y}'C E^\delta_X \\ ({E^\delta_Y}'C E^\delta_X)' & -{E^\delta_X}'(A_s+\gamma_T' \gamma_T)E^\delta_X \end{array}\right]
=\\
&
\left[\begin{array}{@{}cc@{}} \identity & 0 \\({E^\delta_Y}'C E^\delta_X)'  ({E^\delta_Y}'A_s E^\delta_Y)^{-1} & \identity\end{array}\right] \circ\\
&
\left[\begin{array}{@{}cc@{}} {E^\delta_Y}'A_s E^\delta_Y & 0 \\ 0  &
-{E^\delta_X}'(A_s+\gamma_T' \gamma_T)E^\delta_X-({E^\delta_Y}'C E^\delta_X)' ({E^\delta_Y}'A_s E^\delta_Y)^{-1}{E^\delta_Y}'C E^\delta_X
\end{array}\right] \circ\\
&
\left[\begin{array}{@{}cc@{}} \identity & ({E^\delta_Y}'A_s E^\delta_Y)^{-1}{E^\delta_Y}'C E^\delta_X \\0   & \identity\end{array}\right].
\end{split}
\ee
We conclude that this Galerkin operator is invertible if and only if 
the Schur complement
\be \label{schur}
{E^\delta_X}'(A_s+\gamma_T' \gamma_T)E^\delta_X+({E^\delta_Y}'C E^\delta_X)' ({E^\delta_Y}'A_s E^\delta_Y)^{-1}{E^\delta_Y}'C E^\delta_X
\ee
is invertible, which holds true for any $X^\delta \neq \{0\}$.

\begin{theorem} \label{thm:main} Let $(Y^\delta,X^\delta)_{\delta \in \Delta}$ be a family of closed subspaces of $Y \times X$ such that 
\be \label{infsup}
\gamma_\Delta:=\inf_{\delta\in \Delta}\inf_{\{u \in X^\delta\colon \partial_t u \neq 0\}} \sup_{0\neq v \in Y^\delta} \frac{(\partial_t u)(v)}{\|\partial_t u\|_{Y'}\|v\|_Y} >0.
\footnote{Here and in the following, $\inf_{\{u \in X^\delta\colon \partial_t u \neq 0\}} \sup_{0\neq v \in Y^\delta} \frac{(\partial_t u)(v)}{\|\partial_t u\|_{Y'}\|v\|_Y}$ should be read as $1$
in the case that $\{u \in X^\delta\colon \partial_t u \neq 0\}=\emptyset$.}
\ee 
Let $\rho=\rho_\Delta$ be the root in $[0,1)$ of
$$
\gamma_\Delta^2 (\rho^2-\rho)+\|A_a\|_{\cL(Y,Y')}^2(\rho-1)+\rho=0,
$$
and let
$$
C_\Delta:=\frac{(3+\|A_a\|_{\cL(Y,Y')}^2)(\sqrt{3}+\|A_a\|_{\cL(Y,Y')})}{(1-\rho_\Delta)\gamma_\Delta^2},
$$
so that $C_\Delta=3 \sqrt{3} \,\gamma_\Delta^{-2}$ when $\|A_a\|_{\cL(Y,Y')}=0$, and $\lim_{\|A_a\|_{\cL(Y,Y')} \rightarrow \infty} C_\Delta=\infty$.
Then with $\lambda=u$ and $(\lambda^\delta,u^\delta)$ denoting the solutions of \eqref{var3} and its Galerkin discretization, respectively, it holds that
\be \label{21}
\sqrt{\|\lambda -\lambda^\delta\|^2_Y+\|u -u^\delta\|^2_X} \leq C_\Delta 
  \inf_{(\bar{\lambda}^\delta,\bar{u}^\delta) \in Y^\delta\times X^\delta} \sqrt{\|\lambda -\bar{\lambda}^\delta\|^2_Y+\|u -\bar{u}^\delta\|^2_X}.
\ee
\end{theorem}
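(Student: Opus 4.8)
The plan is to deduce \eqref{21} from the abstract result of §\ref{Sgeneral}, applied with $W=Z=Y\times X$ carrying the energy norms, $W^\delta=Z^\delta=Y^\delta\times X^\delta$, and $F$ the self\-adjoint operator on the left\-hand side of \eqref{var3}. The factorization \eqref{factorized} already shows that its Galerkin operator is boundedly invertible for every $X^\delta\neq\{0\}$, so that theorem, in its quantitative form (cf.\ Remark~\ref{remmie}), gives \eqref{21} with the constant $\|F\|_{\cL(Y\times X,\,Y'\times X')}/\Gamma_\Delta$, where
\[
\Gamma_\Delta := \inf_{\delta\in\Delta}\ \inf_{0\neq\zeta\in Y^\delta\times X^\delta}\ \sup_{0\neq\eta\in Y^\delta\times X^\delta}\frac{|(F\zeta)(\eta)|}{\|\zeta\|\,\|\eta\|}.
\]
It therefore suffices to prove the two bounds $\|F\|_{\cL(Y\times X,\,Y'\times X')}\le \sqrt{3}+\|A_a\|_{\cL(Y,Y')}$ and $\Gamma_\Delta\ge (1-\rho_\Delta)\gamma_\Delta^2\big/\big(3+\|A_a\|_{\cL(Y,Y')}^2\big)$; their combination yields exactly $C_\Delta$.

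First I would bound $\|F\|$. Since $F$ is self\-adjoint, $\|F\|_{\cL(Y\times X,\,Y'\times X')}=\sup_{0\neq(v,w)}|(F(v,w))(v,w)|/\|(v,w)\|^2$, and a short computation gives $(F(v,w))(v,w)=\|v\|_Y^2+2(Cw)(v)-\|w\|_Y^2-\|w(T)\|^2$. Using $C=B-A_s=\partial_t+A_a$ (as $A=A_s+A_a$), one has $\|Cw\|_{Y'}\le\|\partial_t w\|_{Y'}+\|A_a\|_{\cL(Y,Y')}\|w\|_Y\le\sqrt{1+\|A_a\|_{\cL(Y,Y')}^2}\,\|w\|_X$, and optimizing the resulting elementary estimate over $\|v\|_Y$ and $\|w\|_X$ yields $\|F\|\le\frac12+\sqrt{\frac14+1+\|A_a\|_{\cL(Y,Y')}^2}\le \sqrt3+\|A_a\|_{\cL(Y,Y')}$.

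For the lower bound on $\Gamma_\Delta$ I would exploit the factorization \eqref{factorized}. Writing $A_s^\delta:={E_Y^\delta}'A_sE_Y^\delta$ (an isometry in the energy norm), $C^\delta:={E_Y^\delta}'CE_X^\delta$, and $S^\delta$ for the Schur complement \eqref{schur}, one has $G^\delta=L'\,\blockdiag(A_s^\delta,-S^\delta)\,L$ with $L$ the unit block\-triangular factor from \eqref{factorized}, so (for each $\delta$) $\|(G^\delta)^{-1}\|\le \|L^{-1}\|^2\,\max\!\big(1,\|(S^\delta)^{-1}\|\big)$. A short computation, again via $C=\partial_t+A_a$ and Cauchy--Schwarz, gives $\|L^{-1}\|^2\le 3+\|A_a\|_{\cL(Y,Y')}^2$. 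The crucial — and hardest — ingredient is the ellipticity of $S^\delta$: for $w\in X^\delta$,
\[
(S^\delta w)(w)=\|w\|_Y^2+\|w(T)\|^2+\Big(\sup_{0\neq v\in Y^\delta}\frac{(Cw)(v)}{\|v\|_Y}\Big)^2 ,
\]
and by \eqref{infsup} together with $C=\partial_t+A_a$ the supremum is $\ge\big(\gamma_\Delta\|\partial_t w\|_{Y'}-\|A_a\|_{\cL(Y,Y')}\|w\|_Y\big)_+$. Inserting this and minimizing $(S^\delta w)(w)/\|w\|_X^2$ over all nonnegative values of the three quantities $\|w\|_Y$, $\|\partial_t w\|_{Y'}$, $\|w(T)\|$ (which only decreases the bound; the worst case has $\|w(T)\|=0$) reduces, by homogeneity, to a one\-parameter minimization whose unique interior critical point is $\gamma_\Delta\rho_\Delta/\|A_a\|_{\cL(Y,Y')}$, with $\rho_\Delta$ the root in $[0,1)$ of the displayed quadratic, and at which the value is exactly $(1-\rho_\Delta)\gamma_\Delta^2$. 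Hence $(S^\delta w)(w)\ge(1-\rho_\Delta)\gamma_\Delta^2\|w\|_X^2$, so $\|(S^\delta)^{-1}\|\le\big((1-\rho_\Delta)\gamma_\Delta^2\big)^{-1}\ge 1$, and assembling the three estimates gives $\Gamma_\Delta\ge (1-\rho_\Delta)\gamma_\Delta^2/(3+\|A_a\|_{\cL(Y,Y')}^2)$; the value $3\sqrt3\,\gamma_\Delta^{-2}$ for $\|A_a\|_{\cL(Y,Y')}=0$ (where $\rho_\Delta=0$) and the blow\-up as $\|A_a\|_{\cL(Y,Y')}\to\infty$ are then immediate.

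The main obstacle is this Schur\-complement step: carrying out the constrained minimization and verifying that the minimum is attained at the interior critical point identified by the quadratic for $\rho_\Delta$, rather than on the boundary of the feasible region (where $\gamma_\Delta\|\partial_t w\|_{Y'}=\|A_a\|_{\cL(Y,Y')}\|w\|_Y$ or $\|\partial_t w\|_{Y'}=0$). This is where the nonsymmetric part $A_a$ enters nontrivially, since it couples $\|w\|_Y$ and $\|\partial_t w\|_{Y'}$ inside the supremum; for $A_a=0$ the supremum simplifies to $\ge\gamma_\Delta\|\partial_t w\|_{Y'}$ and the minimization is trivial. The remaining pieces — the identity for $(F(v,w))(v,w)$, the bound on $\|L^{-1}\|$, and the bookkeeping of constants through \eqref{factorized} to land exactly on the stated $C_\Delta$ — are elementary.
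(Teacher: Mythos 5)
Your proposal is correct and mirrors the paper's own proof: it combines the abstract quasi-optimality bound of Remark~\ref{remmie} (together with \eqref{stablePG3}) with the block-LDU factorization \eqref{factorized}, bounding $\|F\|_{\cL(Y\times X,Y'\times X')}$ above, the inverses of the unit-triangular factors by $\sqrt{3+\|A_a\|_{\cL(Y,Y')}^2}$, and the Schur complement below by $(1-\rho_\Delta)\gamma_\Delta^2$, to land exactly on $C_\Delta$. The only cosmetic differences are that you bound $\|F\|$ via the quadratic form of the self-adjoint operator (getting the slightly sharper $\tfrac12+\sqrt{\tfrac54+\|A_a\|_{\cL(Y,Y')}^2}$ before relaxing to $\sqrt3+\|A_a\|_{\cL(Y,Y')}$) rather than by splitting off the $A_a$ block, and you derive $(S^\delta w)(w)\geq(1-\rho_\Delta)\gamma_\Delta^2\|w\|_X^2$ from the reverse triangle inequality plus a one-parameter constrained minimization, whereas the paper applies Young's inequality with parameter $\rho_\Delta$ — two dual ways of arriving at the identical constant.
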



\begin{proof} 
In view of the second inequality presented in Remark~\ref{remmie}, we start with bounding the norm of the continuous operator.
Using Young's inequality, for $(\lambda,u) \in Y \times X$ we have
\begin{align*}
&\|A_s\lambda +\partial_t u\|_{Y'}^2+\|\partial_t' \lambda -(A_s+\gamma_T' \gamma_T)u\|^2_{X'} \\
& \leq
{\textstyle \frac{3}{2}} \|A_s\lambda\|_{Y'}^2+3\|\partial_t u\|_{Y'}^2+
{\textstyle \frac{3}{2}}\|\partial_t' \lambda\|^2_{X'}+3\|(A_s+\gamma_T' \gamma_T)u\|^2_{X'}\\
&\leq
{\textstyle \frac{3}{2}}(\|\lambda\|_{Y}^2+\|\lambda\|_{Y}^2)+3 (\|\partial_t u\|_{Y'}^2+\|u\|^2_Y+\|u(T)\|^2)=3(\|\lambda\|_Y^2+\|u\|_{X}^2).
\end{align*}
Together with $\|A_a u\|_{Y'}^2+\|A_a' \lambda\|_{X'}^2\leq \|A_a\|^2_{\cL(Y,Y')}(\|\lambda\|_Y^2+\|u\|_X^2)$, it shows that
\begin{align*}
&\Big\| \left[\begin{array}{@{}cc@{}} A_s & C \\ C'  & -(A_s+\gamma_T' \gamma_T) \end{array}\right]\Big\|_{\cL(Y \times X,Y' \times X')}\\
&\leq
\Big\| \left[\begin{array}{@{}cc@{}} A_s & \partial_t \\ \partial_t '  & -(A_s+\gamma_T' \gamma_T) \end{array}\right]\Big\|_{\cL(Y \times X,Y' \times X')}+
\Big\| \left[\begin{array}{@{}cc@{}} 0 & A_a \\ A_a'  & 0 \end{array}\right]\Big\|_{\cL(Y \times X,Y' \times X')}\\
&\leq \sqrt{3}+\|A_a\|_{\cL(Y,Y')}.
\end{align*}

To bound, in view of \eqref{stablePG3}, the norm of the inverse of the Galerkin operator, we use the block-LDU factorization \eqref{factorized}.
With $r:=(1+\|A_a\|_{\cL(Y,Y')}^2)$, for $u \in X$ it holds that
$$
\|C u\|_{Y'} \leq \|\partial_t u\|_{Y'}+\|A_a\|_{\cL(Y,Y')} \|u\|_{Y} \leq \sqrt{r}\, \|u\|_X.
$$
Together with the fact that ${E_Y^\delta}' A_s E_Y^\delta \in \Lis(Y^\delta,{Y^\delta}')$ is an isometry and again Young's inequality, it shows that for $(\lambda,u) \in Y^\delta\times X^\delta$,
\begin{align*}
\|\lambda-({E_Y^\delta}' A_s E_Y^\delta)^{-1} {E_Y^\delta}' C E_X^\delta u\|_Y^2+\|u\|_X^2 &\leq (1+r)\|\lambda\|_Y^2+(1+r^{-1}) r \|u\|_X^2+ \|u\|_X^2\\
&\leq (2+r)(\|\lambda\|_Y^2+\|u\|_X^2),
\end{align*}
or
$$
\Big\|
\left[\begin{array}{@{}cc@{}} \identity & ({E^\delta_Y}'A_s E^\delta_Y)^{-1}{E^\delta_Y}'C E^\delta_X \\0   & \identity\end{array}\right]^{-1}
\Big\|_{\cL(Y^\delta \times X^\delta,Y^\delta \times X^\delta)}
\leq \sqrt{3+\|A_a\|_{\cL(Y,Y')}^2}.
$$
Obviously, the $\cL({Y^\delta}' \times {X^\delta}',{Y^\delta}' \times {X^\delta}')$-norm of the inverse of the first factor at the right-hand side of \eqref{factorized} satisfies the same bound.

Moving to the second factor, 
we consider the Schur complement operator.
From $({E_Y^\delta}' A_s E_Y^\delta \lambda)(\lambda) =\|\lambda\|^2_Y$ for $\lambda \in Y^\delta$, we have for $f \in {Y^\delta}'$,  $f(({E_Y^\delta}' A_s E_Y^\delta)^{-1}f) =\|({E_Y^\delta}' A_s E_Y^\delta)^{-1} f\|^2_Y=\|f\|_{{Y^\delta}'}^2$, and so for $u \in X^\delta$
$$
\Big(({E^\delta_Y}'C E^\delta_X)' ({E^\delta_Y}'A_s E^\delta_Y)^{-1}{E^\delta_Y}'C E^\delta_X u\Big)(u)=\|{E^\delta_Y}'C E^\delta_X u\|_{{Y^\delta}'}^2.
$$
Using that for $u \in X^\delta$,
$$
\|{E_Y^\delta}' \partial_t E_X^\delta u\|_{{Y^\delta}'}^2=\Big(\sup_{0 \neq v \in Y^\delta}\frac{(\partial_t u)(v)}{\|v\|_Y}\Big)^2 \geq \gamma_\Delta^2 \|\partial_t u\|_{Y'}^2
$$
and
$$
\|{E_Y^\delta}' A_a E_X^\delta u\|_{{Y^\delta}'}^2 \leq \|A_a\|_{\cL(Y,Y')}^2 \|u\|_{Y}^2,
$$
Young's inequality shows that                                
\begin{align*}
\|{E^\delta_Y}'C E^\delta_X u\|_{{Y^\delta}'}^2
\geq (1-\rho_\Delta) \gamma_\Delta^2 \|\partial_t u\|_{Y'}^2+(1-\rho_\Delta^{-1}) \|A_a\|_{\cL(Y,Y')}^2 \|u\|_{Y}^2,
\end{align*}
where we assumed that $\rho_\Delta>0$ i.e.~$A_a \neq 0$.
It follows that
\begin{align}\nonumber
((A_s+&\gamma_T' \gamma_T)u)(u)+\|{E^\delta_Y}'C E^\delta_X u\|_{{Y^\delta}'}^2
\\\nonumber
&\geq(1+(1-\rho_\Delta^{-1}) \|A_a\|_{\cL(Y,Y')}^2)\|u\|_Y^2+\|u(T)\|^2+(1-\rho_\Delta) \gamma_\Delta^2 \|\partial_t u\|_{Y'}^2\\ \label{extra}
&\geq (1-\rho_\Delta)\gamma_\Delta^2 \|u\|_X^2
\end{align}
where we used that $1+(1-\rho_\Delta^{-1}) \|A_a\|_{\cL(Y,Y')}^2=(1-\rho_\Delta) \gamma_\Delta^2$ by definition of $\rho_\Delta$.
One easily verifies \eqref{extra} also in the case that $A_a=0$ i.e.~$\rho_\Delta=0$.

Since ${E_Y^\delta}' A_s E_Y^\delta \in \Lis(Y^\delta,{Y^\delta}')$ is an isometry, and $0<(1-\rho_\Delta)\gamma_\Delta^2 \leq \gamma_\Delta^2 \leq 1$, we conclude that the $\cL({Y^\delta}'\times{X^\delta}', Y^\delta\times X^\delta)$-norm of the inverse of the second factor is bounded by $(1-\rho_\Delta)^{-1}\gamma_\Delta^{-2}$.

In view of the second inequality presented in Remark~\ref{remmie} in combination with \eqref{stablePG3}, 
the proof is completed by collecting the bounds that were derived.
\end{proof}

\subsection{Galerkin discretizations of \eqref{var4}} 
Although it is likely possible to generalize results to the case of $A_a \neq 0$, as in \cite{11,249.2} in this section we operate under the condition that
\be \label{symm}
A=A_s.
\ee
Following \cite{249.2}, for a given closed subspace $Y^\delta \subseteq Y$ we define the `mesh-dependent' norm on $X$ by
$$
\|u\|_{X,Y^\delta}^2:=\|u\|_{Y}^2 +\sup_{0 \neq v \in Y^\delta} \frac{(\partial_t u)(v)^2}{\|v\|_{Y}^2}  + \|u(T)\|^2.
$$
Note that $\|\,\|_{X,Y}=\|\,\|_{X}$.

The following result generalizes the `inf-sup identity', known for $Y^\delta=Y$, see e.g.~\cite{70.95}, to mesh-dependent norms.

\begin{lemma} \label{infsupidentity}
Assuming \eqref{symm}, then for $u \in Y^\delta \cap X$, 
$$
\|u\|_{X,Y^\delta}^2=\sup_{0 \neq v \in Y^\delta} \frac{(Bu)(v)^2}{\|v\|_{Y}^2}+\|u(0)\|^2.
$$
If additionally $\gamma_0 u\in H^\delta$, then 
\be \label{infsup3}
\|u\|_{X,Y^\delta}^2=\sup_{0 \neq (v_1,v_2) \in Y^\delta \times H^\delta} \frac{((Bu)(v_1)+\langle u(0),v_2\rangle)^2}{\|v_1\|_{Y}^2+\|v_2\|^2}.
\ee
\end{lemma}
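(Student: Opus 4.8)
The plan is to establish the two claimed identities by rewriting the sup-expressions using the energy inner product on $Y^\delta$ and exploiting the integration-by-parts identity derived earlier in Section~\ref{S2}. For the first identity, I would fix $u \in Y^\delta \cap X$ and recall that, under \eqref{symm}, $B = A_s + \partial_t$, so that $(Bu)(v) = (A_s u)(v) + (\partial_t u)(v)$ for $v \in Y^\delta$. Since ${E_Y^\delta}' A_s E_Y^\delta$ is an isometry on $Y^\delta$, the functional $v \mapsto (A_s u)(v)$ on $Y^\delta$ is the Riesz representation (w.r.t.\ the energy scalar product) of $P^\delta u$, the energy-orthogonal projection of $u$ onto $Y^\delta$; but here $u \in Y^\delta$ already, so $v \mapsto (A_s u)(v)$ represents $u$ itself. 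Thus, writing everything in the energy inner product on $Y^\delta$, the quantity $\sup_{0 \neq v \in Y^\delta} (Bu)(v)^2/\|v\|_Y^2$ is the squared $Y^\delta$-norm of the Riesz representative of $v \mapsto (A_s u)(v) + (\partial_t u)(v)$, which splits (after expanding the square and using that the cross term equals $2(A_s u)(w_{\partial_t u})$ where $w_{\partial_t u}$ is the Riesz representative of $\partial_t u|_{Y^\delta}$) into $\|u\|_Y^2 + 2(\partial_t u)(u) + \sup_{0\neq v\in Y^\delta}(\partial_t u)(v)^2/\|v\|_Y^2$.

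The crucial step is then to identify $2(\partial_t u)(u)$. Here I would invoke the computation already carried out in the text showing $((\partial_t + \partial_t')w)(v) + \langle w(0),v(0)\rangle = (\gamma_T'\gamma_T w)(v)$; specializing to $w = v = u$ gives $2(\partial_t u)(u) = \|u(T)\|^2 - \|u(0)\|^2$ (this uses $u \in X \hookrightarrow C(\bar I, H)$ so the traces make sense, and the fundamental theorem of calculus $\int_I \frac{d}{dt}\|u(t)\|^2\,dt = \|u(T)\|^2 - \|u(0)\|^2$). Substituting, the sup becomes $\|u\|_Y^2 + \|u(T)\|^2 - \|u(0)\|^2 + \sup_{0\neq v\in Y^\delta}(\partial_t u)(v)^2/\|v\|_Y^2 = \|u\|_{X,Y^\delta}^2 - \|u(0)\|^2$, which rearranges to the first claimed identity. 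I expect this identification of the cross term with the boundary traces to be the main (though not deep) obstacle, mostly a matter of carefully tracking which functional represents which element and making sure the density $X \hookrightarrow C(\bar I,H)$ is used to justify the pointwise-in-time manipulations.

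For the second identity \eqref{infsup3}, under the additional hypothesis $\gamma_0 u \in H^\delta$ I would observe that the numerator $(Bu)(v_1) + \langle u(0), v_2\rangle$ is, as a functional on $Y^\delta \times H^\delta$ (equipped with the product norm $\|v_1\|_Y^2 + \|v_2\|^2$), a direct sum of the functional $v_1 \mapsto (Bu)(v_1)$ on $Y^\delta$ and the functional $v_2 \mapsto \langle u(0), v_2\rangle$ on $H^\delta$. Hence its squared dual norm is the sum of the two squared dual norms: the first is $\sup_{0\neq v_1\in Y^\delta}(Bu)(v_1)^2/\|v_1\|_Y^2$, and the second is $\sup_{0\neq v_2\in H^\delta}\langle u(0),v_2\rangle^2/\|v_2\|^2 = \|u(0)\|^2$ precisely because $u(0) \in H^\delta$ (so the sup is attained at $v_2 = u(0)$). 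Adding these and invoking the first part of the lemma yields $\|u\|_{X,Y^\delta}^2$, completing the proof. I would close by noting this second part is purely the elementary fact that the dual norm of a functional on an orthogonal direct sum splits as a Pythagorean sum, so no new difficulty arises there.
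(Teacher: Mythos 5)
Your proposal is correct and follows essentially the same path as the paper's own proof: the paper also introduces the Riesz representative $y \in Y^\delta$ of $v\mapsto(\partial_t u)(v)$, writes $(Bu)(v)=(A_s(y+u))(v)$ for $v\in Y^\delta$ (using $u\in Y^\delta$), expands $(A_s(y+u))(y+u)$ into the three terms you list, identifies the cross term $2(\partial_t u)(u)=\|u(T)\|^2-\|u(0)\|^2$, and then obtains \eqref{infsup3} from the Pythagorean splitting of the dual norm on the product $Y^\delta\times H^\delta$ together with $u(0)\in H^\delta$. The only cosmetic differences are that the paper names the representative $y$ explicitly and that you mildly conflate ``the functional $v\mapsto(A_su)(v)$'' with ``its Riesz representative'' in one sentence, but you immediately clarify the intended meaning, so the argument is sound.
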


\begin{proof}
Let $y \in Y^\delta$ be defined by $(A_s y)(v)=(\partial_t u)(v)$ ($v \in Y^\delta$).
Then $(A_s y)(y)=\sup_{0 \neq v \in Y^\delta} \frac{(\partial_t u)(v)^2}{\|v\|_{Y}^2}$.
Furthermore, for $v \in Y^\delta$, $(Bu)(v)=(A_s(y+u))(v)$ and so, thanks to $u \in Y^\delta$,
\begin{align*}
\sup_{0 \neq v \in Y^\delta} \frac{(Bu)(v)^2}{\|v\|_{Y}^2}&=(A_s(y+u))(y+u)=
(A_s y)(y)+2(A_s y)(u)+ (A_s u)(u)\\
&=
(A_s y)(y)+2(\partial_t u)(u)+ (A_s u)(u)=
\|u\|_{X,Y^\delta}^2-\|u(0)\|^2
\end{align*}
where we used that $2\int_I  \langle \partial_t u(t),u(t)\rangle\,dt=\|u(T)\|^2-\|u(0)\|^2$.

The second statement follows from 
$$
\sup_{0 \neq (v_1,v_2) \in Y^\delta \times H^\delta} \frac{((A_s(y+u))(v_1)+\langle u(0),v_2\rangle)^2}{\|v_1\|_{Y}^2+\|v_2\|^2}=
(A_s(y+u))(y+u)+\|u(0)\|^2,
$$
thanks to $u(0) \in H^\delta$.
\end{proof}

The next theorem gives sufficient conditions for existence and uniqueness of solutions of the Galerkin discretization of \eqref{var4}, and provides a suboptimal error estimate.

\begin{theorem} \label{suboptimal} Assuming \eqref{symm}, for closed subspaces $Y^\delta \times H^\delta \times X^\delta \subset Y \times H \times X$ with $X^\delta \subseteq Y^\delta$ and $\ran \gamma_0|_{X^\delta}\subseteq H^\delta$, the Galerkin discretization of \eqref{var4} has a unique solution $(\mu^\delta,\sigma^\delta,u^\delta) \in Y^\delta \times H^\delta \times X^\delta $, and with $u$ denoting the solution of \eqref{exact},
$$
\|u-u^\delta\|_{X,Y^\delta} \leq 2 \inf_{\bar{u}^\delta \in X^\delta} \|u-\bar{u}^\delta\|_{X}.
$$
\end{theorem}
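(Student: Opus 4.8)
The plan is to realize the Galerkin discretization of \eqref{var4} as a Petrov--Galerkin discretization of \eqref{exact} in disguise, and then apply Theorem~\ref{thm0} (the general quasi-optimality result) together with the inf-sup identity of Lemma~\ref{infsupidentity}. Concretely, since the secondary variables $\mu$ and $\sigma$ in \eqref{var4} vanish and the trial space $X^\delta$ is contained in $Y^\delta$ with $\ran\gamma_0|_{X^\delta}\subseteq H^\delta$, I would first eliminate $\mu^\delta$ and $\sigma^\delta$: the first two block rows read $(A_s\mu^\delta)(v_1)=g(v_1)-(Bu^\delta)(v_1)$ for $v_1\in Y^\delta$ and $\langle\sigma^\delta,v_2\rangle = \langle u_0,v_2\rangle - \langle u^\delta(0),v_2\rangle$ for $v_2\in H^\delta$, which uniquely determine $\mu^\delta\in Y^\delta$ and $\sigma^\delta\in H^\delta$ given $u^\delta$. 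Substituting into the third block row $(B'\mu^\delta+\gamma_0'\sigma^\delta)(w)=0$ for $w\in X^\delta$ shows $u^\delta\in X^\delta$ is characterized by a normal-equation/Schur-complement condition, namely that $u^\delta$ minimizes the residual $\|Bw-g\|_{(Y^\delta)'}^2 + \|w(0)-u_0\|^2$ (measured in the dual norm dual to $Y^\delta\times H^\delta$) over $w\in X^\delta$; here I use that $A_s$ restricted to $Y^\delta$ is the Riesz isometry for the energy norm, so $({E_Y^\delta}'A_sE_Y^\delta)^{-1}$ inverts cleanly.

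The second step is existence and uniqueness. The minimization problem just described has a unique solution iff the quadratic form $w\mapsto \sup_{(v_1,v_2)\in Y^\delta\times H^\delta}\frac{((Bw)(v_1)+\langle w(0),v_2\rangle)^2}{\|v_1\|_Y^2+\|v_2\|^2}$ is a norm on $X^\delta$. But by \eqref{infsup3} of Lemma~\ref{infsupidentity} — whose hypotheses $X^\delta\subseteq Y^\delta$ and $\gamma_0 X^\delta\subseteq H^\delta$ are exactly what we assumed — this form equals $\|w\|_{X,Y^\delta}^2$, which is visibly a norm on $X^\delta$ (it dominates $\|w\|_Y^2$, hence is positive definite). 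So the Galerkin system has a unique solution $(\mu^\delta,\sigma^\delta,u^\delta)$, and moreover the residual-minimization characterization gives a Céa-type orthogonality: for any $\bar u^\delta\in X^\delta$,
$$
\|u^\delta\|_{X,Y^\delta}^2 = \sup_{(v_1,v_2)}\frac{((Bu^\delta)(v_1)+\langle u^\delta(0),v_2\rangle)^2}{\|v_1\|_Y^2+\|v_2\|^2},
$$
and the minimizing property together with the exact equation \eqref{exact} (which says $(Bu)(v_1)+\langle u(0),v_2\rangle = g(v_1)+\langle u_0,v_2\rangle$) will let me compare.

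The third step is the error bound. Writing $e^\delta:=u^\delta-\bar u^\delta\in X^\delta$ for an arbitrary $\bar u^\delta\in X^\delta$, I apply \eqref{infsup3} to $e^\delta$:
$$
\|e^\delta\|_{X,Y^\delta}^2 = \sup_{(v_1,v_2)}\frac{((Be^\delta)(v_1)+\langle e^\delta(0),v_2\rangle)^2}{\|v_1\|_Y^2+\|v_2\|^2}.
$$
Now $(Be^\delta)(v_1)+\langle e^\delta(0),v_2\rangle = \big[(Bu^\delta)(v_1)+\langle u^\delta(0),v_2\rangle\big] - \big[(B\bar u^\delta)(v_1)+\langle \bar u^\delta(0),v_2\rangle\big]$; the Galerkin equations force the first bracket to equal $g(v_1)+\langle u_0,v_2\rangle$ for all test pairs in $Y^\delta\times H^\delta$ (this is precisely the eliminated system), and by \eqref{exact} the exact solution $u$ satisfies $(Bu)(v_1)+\langle u(0),v_2\rangle = g(v_1)+\langle u_0,v_2\rangle$ as well; hence the first bracket equals $(Bu)(v_1)+\langle u(0),v_2\rangle$. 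Therefore $(Be^\delta)(v_1)+\langle e^\delta(0),v_2\rangle = (B(u-\bar u^\delta))(v_1)+\langle (u-\bar u^\delta)(0),v_2\rangle$, and taking the supremum gives $\|e^\delta\|_{X,Y^\delta} \le \|u-\bar u^\delta\|_{X,Y}=\|u-\bar u^\delta\|_X$ (using $\|\cdot\|_{X,Y^\delta}\le\|\cdot\|_{X,Y}$ since enlarging the test space only increases the supremum, and $\|\cdot\|_{X,Y}=\|\cdot\|_X$). Finally a triangle inequality, $\|u-u^\delta\|_{X,Y^\delta}\le \|u-\bar u^\delta\|_{X,Y^\delta}+\|e^\delta\|_{X,Y^\delta}\le 2\|u-\bar u^\delta\|_X$, and infimizing over $\bar u^\delta\in X^\delta$ yields the claim.

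I expect the main obstacle to be the first step: carefully carrying out the elimination of $\mu^\delta$ and $\sigma^\delta$ and verifying that the resulting condition on $u^\delta$ is exactly "$(Bu^\delta)(v_1)+\langle u^\delta(0),v_2\rangle = g(v_1)+\langle u_0,v_2\rangle$ for all $(v_1,v_2)\in Y^\delta\times H^\delta$" — i.e., that the saddle-point structure collapses to this Galerkin-on-the-mixed-form identity — and confirming that the norm in which the implicit residual minimization happens is dual to $\|\cdot\|_Y^2+\|\cdot\|^2$ on $Y^\delta\times H^\delta$. Once that identification is in place, Lemma~\ref{infsupidentity} does all the real work and the error estimate is a two-line argument. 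The loss of optimality (the factor $2$, and the appearance of the mesh-dependent norm $\|\cdot\|_{X,Y^\delta}$ on the left rather than $\|\cdot\|_X$) is intrinsic: we only control the residual tested against $Y^\delta$, not against all of $Y$, so Lemma~\ref{infsupidentity} can only reconstruct the weaker norm.
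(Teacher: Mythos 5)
Your steps 1 and 2 are fine and match the paper's line of reasoning (unique solvability follows from the inf-sup identity \eqref{infsup3}, and eliminating $\mu^\delta,\sigma^\delta$ exhibits $u^\delta$ as a discrete residual minimizer). The third step, however, contains a genuine gap: you assert that the Galerkin equations force $(Bu^\delta)(v_1)+\langle u^\delta(0),v_2\rangle = g(v_1)+\langle u_0,v_2\rangle$ for \emph{all} $(v_1,v_2)\in Y^\delta\times H^\delta$, and you identify this with ``the eliminated system.'' But that identity is equivalent to $\mu^\delta=0$ and $\sigma^\delta=0$, which is false in general: the first block row of the Galerkin system only says $(A_s\mu^\delta)(v_1)+(Bu^\delta)(v_1)=g(v_1)$ for $v_1\in Y^\delta$, i.e.\ $\mu^\delta$ is the (generally nonzero) Riesz representative in $Y^\delta$ of the residual $g-Bu^\delta$, and similarly for $\sigma^\delta$. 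The eliminated (Schur-complement) system is the normal equation obtained by substituting these into the third block row and testing with $w\in X^\delta$ only; it does not make the full residual vanish against all of $Y^\delta\times H^\delta$.

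The repair is the one the paper actually makes. Decompose $Y^\delta\times H^\delta = Z^\delta\oplus W^\delta$, where $Z^\delta:=\clos\{(y_w,h_w):w\in X^\delta\}$ is the span of the Riesz lifts $(y_w,h_w)$ of the pairs $(Bw,\gamma_0 w)$ for $w\in X^\delta$, and $W^\delta$ is the orthogonal complement in the energy inner product. Two facts then save the argument: (a) for any $w\in X^\delta$ and $(v_1,v_2)\in W^\delta$, one has $(Bw)(v_1)+\langle w(0),v_2\rangle=(A_s y_w)(v_1)+\langle h_w,v_2\rangle=0$, so the supremum in \eqref{infsup3}, when applied to an element of $X^\delta$, is unchanged if restricted to $Z^\delta$; and (b) the Galerkin equations \emph{are} equivalent to $(Bu^\delta)(v_1)+\langle u^\delta(0),v_2\rangle=g(v_1)+\langle u_0,v_2\rangle$ for $(v_1,v_2)\in Z^\delta$ (the residual is, by construction, $A_s$-orthogonal to every element of $Z^\delta$). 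With the supremum taken over $Z^\delta$, your computation in step 3 then goes through verbatim: replace $u^\delta$ by $u$ using (b), enlarge the supremum to all of $Y\times H$, and conclude $\|u^\delta-\bar u^\delta\|_{X,Y^\delta}\leq\|u-\bar u^\delta\|_X$. So your strategy and your triangle-inequality finish are correct, but the projected test space $Z^\delta$ is indispensable; without it the Galerkin orthogonality you invoke simply does not hold.
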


\begin{proof}
Thanks to the assumptions $X^\delta \subseteq Y^\delta$ and $\ran \gamma_0|_{X^\delta}\subseteq H^\delta$,
the inf-sup identity  \eqref{infsup3} quarantees the unique solvability of the Galerkin system.

For any $u \in X^\delta$, there exist unique $y_u \in Y^\delta$, $h_u \in H^\delta$ such that
$$
(A_s y_u)(v_1)+\langle h_u,v_2\rangle=(Bu)(v_1)+\langle \gamma_0 u,v_2\rangle\quad((v_1,v_2) \in Y^\delta\times H^\delta).
$$
We decompose $Y^\delta \times H^\delta$ into $Z^\delta:=\clos\{(y_u,h_u)\colon u  \in X^\delta\}$\footnote{In the (discontinuous) Petrov-Galerkin community, $Y^\delta \times H^\delta$ and $Z^\delta$ are known under the names test search space (or search test space), and projected optimal test space (or approximate optimal test space), respectively.}
and its orthogonal complement $W^\delta$. Using that for any $u \in X^\delta$ and $(v_1,v_2) \in W^\delta$, $(B u)(v_1)+\langle u(0),v_2\rangle=0$, one infers that
for any $u \in X^\delta$, the inf-sup identity \eqref{infsup3} remains valid when the supremum is restricted to $0 \neq (v_1,v_2) \in Z^\delta$.
Furthermore, since for any $(v_1,v_2) \in Z^\delta$ there exists a $z \in X^\delta$ with $(B z)(v_1)+\langle z(0),v_2\rangle \neq 0$, 
we infer that
$u^\delta$ is the unique solution of the Petrov-Galerkin discretization of finding $u^\delta \in X^\delta$ such that
\be \label{PG}
(B u^\delta)(v_1)+\langle u^\delta(0),v_2\rangle=g(v_1)+\langle u_0,v_2\rangle \quad((v_1,v_2) \in Z^\delta). 
\ee
By applying both these observations consecutively, we infer that for any $\bar{u}^\delta \in X^\delta$,
\be \label{estP}
\begin{split}
&\|u^\delta-\bar{u}^\delta\|^2_{X,Y^\delta}
= \sup_{0 \neq (v_1,v_2) \in Z^\delta} \frac{((B(u^\delta-\bar{u}^\delta))(v_1)+\langle u^\delta(0)-\bar{u}^\delta(0),v_2\rangle)^2}{\|v_1\|_{Y}^2+\|v_2\|^2}\\
&= \sup_{0 \neq (v_1,v_2) \in Z^\delta} \frac{((B(u-\bar{u}^\delta))(v_1)+\langle u(0)-\bar{u}^\delta(0),v_2\rangle)^2}{\|v_1\|_{Y}^2+\|v_2\|^2}
\leq 
\|u-\bar{u}^\delta\|^2_{X},
\end{split}
\ee
where we again applied \eqref{infsup3} now for $Y^\delta=Y$. A triangle-inequality completes the proof.
\end{proof}

Theorem~\ref{suboptimal} can be used to demonstrate optimal rates for the error in $u^\delta$ in the $\|\,\|_{X,Y^\delta}$-norm, and hence also in the $Y$-norm.
Yet, for doing so one needs to control the error of best approximation in the generally strictly stronger $\|\,\|_X$-norm, which requires 
 regularity conditions on the solution $u$ that exceeds those that are needed to guarantee optimal rates 
 of the best approximation in the $\|\,\|_{X,Y^\delta}$-norm.
In other words, this theorem does not show that $u^\delta$ is a quasi-best approximation to $u$ from $X^\delta$ in the $\|\,\|_{X,Y^\delta}$-norm, or in any other norm.

\begin{remark} \label{steinbach1}
Theorem~\ref{suboptimal} provides a generalization, with an improved constant, of Steinbach's result \cite[Theorem 3.2]{249.2}.
There the case was considered that the initial value $u_0=0$, $\ran \gamma_0|_{X^\delta}=\{0\}$,  $H^\delta=\{0\}$, and $Y^\delta=X^\delta$.
 In that case the Galerkin discretization of \eqref{var4} means solving $u^\delta \in X^\delta$ from 
$(B u^\delta)(v)=g(v)$ ($v \in X^\delta$) (indeed, $Z^\delta$ in the proof of Theorem~\ref{suboptimal} is $X^\delta \times \{0\}$).
So with this approach the forming of `normal equations' as in \eqref{var5} is avoided.

In case of an inhomogeneous initial value $u_0 \in H$, one may approximate the solution as $\bar{u}+w^\delta$, where $\bar{u} \in X$ is such that $\gamma_0 \bar{u}=u_0$, and $w^\delta \in X^\delta$ solves
$(B w^\delta)(v)=g(v)-(B \bar{u})(v)$ ($v \in X^\delta$). Although such a $\bar{u} \in X$ always exists, its practical construction becomes inconvenient for $u_0 \not\in V$.
For $u_0 \in V$, $\bar{u}$ can be taken as its constant extension in time.

To investigate in the setting of \cite{249.2} the relation between the $\|\,\|_{X,X^\delta}$- and $\|\,\|_X$-norms, we consider $X^\delta$ of the form $X^\delta_t \otimes X^\delta_x$, where $X^\delta_t$ is the space of continuous piecewise linears, zero at $t=0$, w.r.t.~a uniform partition of $I$ with mesh-size $h_\delta=\frac{T}{2 N_\delta}$ for some $N_\delta \in \N$, and $X^\delta_x \subset V$ with $\cap_{\delta \in \Delta} X_x^\delta \neq \{0\}$.
Given $z^\delta \in X^\delta$, Lemma~\ref{infsupidentity} shows that
\be \label{20}
\sup_{0 \neq v \in X^\delta} \frac{|(B z^\delta)(v)|}{\|z^\delta\|_X\|v\|_Y}= \frac{\|z^\delta\|_{X,X^\delta}}{\|z^\delta\|_X}.
\ee

For some arbitrary, fixed $0 \neq z_x \in \cap_{\delta \in \Delta} X_x^\delta$, we take $z^\delta=z^\delta_t \otimes z_x \in X^\delta$, where $z^\delta_t \in X^\delta_t$ is defined by $\frac{d}{d t} z^\delta_t=(-1)^{i-1}$ on $[(i-1) h_\delta,i h_\delta]$. Since $z^\delta_t(0)=0$, also $z^\delta_t(T)=0$.
We have $\|z^\delta_t\|_{L_2(I)}\eqsim h_\delta$, $\|\frac{d z^\delta_t}{d t}\|_{L_2(I)}\eqsim 1$,
$\sup_{0 \neq v \in Y}\frac{(\partial_t z^\delta)(v)}{\|v\|_Y}=\|\frac{d z^\delta_t}{d t}\|_{L_2(I)}\|z_x\|_{V'} \eqsim 1$,
$\|z^\delta\|_Y =\|z^\delta_t\|_{L_2(I)} \|z_x\|_{V} \eqsim h_\delta$, and
\begin{align*}
\sup_{0 \neq v \in X^\delta}\frac{(\partial_t z^\delta)(v)}{\|v\|_Y}&=
\sup_{0 \neq v \in X_t^\delta}\frac{\langle\frac{dz^\delta_t }{dt}, v \rangle_{L_2(I)}}{\|v\|_{L_2(I)}}
\sup_{0 \neq v \in X_x^\delta}\frac{\langle z_x,  v \rangle}{\|v\|_{V}}\\
& \leq \sup_{0 \neq v \in X_t^\delta}\frac{\langle\frac{dz^\delta_t }{dt}, v \rangle_{L_2(I)}}{\|v\|_{L_2(I)}} \|z_x\|_{V'}.
\end{align*}

Let us equip the space of piecewise constants w.r.t.~the aforementioned uniform partition
with the $L_2(I)$-normalized basis $\{\chi_i^\delta\}$ of characteristic functions of the subintervals, and $X^\delta_t$ with the set of nodal basis functions $\{\phi_i^\delta\}$ normalized such that their maximal value is $h_\delta^{-\frac12}$. Then
with
$G:=[\langle \chi_j,\phi_i\rangle_{L_2(I)}]_{i j}
=\frac12 {\small \left[\begin{array}{@{}rlcl@{}l@{}}
1 & 1 & & \\
&\ddots& \ddots &\\
&  & 1& 1\\
&  & & 1
\end{array}
\right]}$, and $\vec{x}:=\sqrt{h_\delta}\, [(-1)^{i-1}]_{1 \leq i \leq 2 N_\delta}$, from the uniform $L_2(I)$-stability of $\{\phi_i^\delta\}$ one infers that
$$
\sup_{0 \neq v \in X_t^\delta}\frac{\langle\frac{dz^\delta_t }{dt}, v \rangle_{L_2(I)}}{\|v\|_{L_2(I)}} \eqsim
\sup_{0 \neq \vec{y}}\frac{\langle G \vec{x},\vec{y}\rangle}{\|\vec{y}\|} = \|G \vec{x}\| =\frac12 \sqrt{h_\delta}.
$$
By substituting these estimates in the right-hand side of \eqref{20}, we find that its value is $\eqsim \sqrt{h_\delta}$, so that
$\inf_{0 \neq z^\delta \in X^\delta} \sup_{0 \neq v \in X^\delta} \frac{|(B z^\delta)(v)|}{\|z^\delta\|_X\|v\|_Y} \lesssim \sqrt{h_\delta}$.
As follows from the first inequality in Remark~\ref{remmie}, this means that there exist solutions $u \in X$ of the parabolic problem for which the errors in $X$-norm in these Galerkin approximations from $X^\delta$ are a factor $\gtrsim h_\delta^{-\frac12}$ larger than these errors in the best approximations from $X^\delta$.

Numerical evidence provided by \cite[Table 6]{249.2} indicate that in general these Galerkin approximations are not quasi-optimal in the $Y$-norm either.
\end{remark}

Returning to the general setting of Theorem~\ref{suboptimal},
in the following theorem it will be shown that under an \emph{additional} assumption 
quasi-optimal error estimates are valid.

\begin{theorem} \label{andreev} Assuming \eqref{symm}, let $(Y^\delta,H^\delta,X^\delta)_{\delta \in \Delta}$ be a family of closed subspaces of $Y \times H \times X$ such that in addition to
$X^\delta \subseteq Y^\delta$ and $\ran \gamma_0|_{X^\delta}\subseteq H^\delta$, also \eqref{infsup} is valid.
Then for the Galerkin solutions $(\mu^\delta,\sigma^\delta,u^\delta) \in Y^\delta \times H^\delta \times X^\delta $ of \eqref{var4} it holds that
$$
\|u-u^\delta\|_X \leq \gamma_\Delta^{-1} \inf_{\bar{u}^\delta \in X^\delta}\|u-\bar{u}^\delta\|_X.
$$
\end{theorem}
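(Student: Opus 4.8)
The plan is to exhibit the Galerkin solution as $u^\delta=P^\delta u$ for a bounded linear projector $P^\delta$ from $X$ onto $X^\delta$, to bound $\|P^\delta\|_{\cL(X,X)}$ by $\gamma_\Delta^{-1}$, and then to conclude via the identity $\|\identity-P^\delta\|_{\cL(X,X)}=\|P^\delta\|_{\cL(X,X)}$, valid for any bounded projector $P^\delta\notin\{0,\identity\}$ (see \cite{169.5,315.7}) --- exactly the device used in the proof of the theorem of Subsection~\ref{Sgeneral}. It is this last step, rather than a triangle inequality, that yields the constant $\gamma_\Delta^{-1}$ and not $1+\gamma_\Delta^{-1}$, and it is also why one bounds $\|P^\delta\|$ by hand rather than quoting a generic quasi-optimality estimate.

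First I would invoke two facts from the proof of Theorem~\ref{suboptimal}. (i) $u^\delta$ is the unique solution of the Petrov--Galerkin problem \eqref{PG} with trial space $X^\delta$ and test space $Z^\delta$; writing $b(w,(v_1,v_2)):=(Bw)(v_1)+\langle w(0),v_2\rangle$, the right-hand side of \eqref{PG} is $b(u,\cdot)|_{Z^\delta}$ by \eqref{exact}. Hence $P^\delta\colon u\mapsto u^\delta$ is linear; it is bounded on $X$ since $u\mapsto b(u,\cdot)$ is bounded by \eqref{infsup3} (with $Y^\delta=Y$, $H^\delta=H$) and the discrete solve is stable; and it fixes $X^\delta$ by uniqueness, so it is a projector onto $X^\delta$. (ii) For every $w\in X^\delta$ (which lies in $Y^\delta\cap X$ with $\gamma_0 w\in H^\delta$), the mesh-dependent norm is attained already over the small test space: $\|w\|_{X,Y^\delta}^2=\sup_{0\neq(v_1,v_2)\in Z^\delta}b(w,(v_1,v_2))^2/(\|v_1\|_Y^2+\|v_2\|^2)$.

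Next I would record the norm comparison $\|w\|_X\le\gamma_\Delta^{-1}\|w\|_{X,Y^\delta}$ for $w\in X^\delta$: noting $\gamma_\Delta\le 1$ (compare the $Y^\delta$-supremum with the $Y$-supremum), the middle term of $\|w\|_{X,Y^\delta}^2$ is $\ge\gamma_\Delta^2\|\partial_t w\|_{Y'}^2$ by \eqref{infsup}, while the two outer terms dominate $\gamma_\Delta^2$ times themselves. Applying this with $w=u^\delta$, then using the Galerkin identity $b(u^\delta,\cdot)=b(u,\cdot)$ on $Z^\delta$ (from \eqref{PG} and \eqref{exact}), and finally enlarging the supremum from $Z^\delta$ to $Y\times H$, where by \eqref{infsup3} it equals $\|u\|_{X,Y}^2=\|u\|_X^2$, gives $\|u^\delta\|_X\le\gamma_\Delta^{-1}\|u^\delta\|_{X,Y^\delta}\le\gamma_\Delta^{-1}\|u\|_X$; that is, $\|P^\delta\|_{\cL(X,X)}\le\gamma_\Delta^{-1}$.

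Finally, in the only nontrivial case $\{0\}\subsetneq X^\delta\subsetneq X$, so $P^\delta\notin\{0,\identity\}$, the identity $\|\identity-P^\delta\|_{\cL(X,X)}=\|P^\delta\|_{\cL(X,X)}$ together with $(\identity-P^\delta)\bar u^\delta=0$ for $\bar u^\delta\in X^\delta$ gives $\|u-u^\delta\|_X=\|(\identity-P^\delta)(u-\bar u^\delta)\|_X\le\gamma_\Delta^{-1}\|u-\bar u^\delta\|_X$ for every $\bar u^\delta\in X^\delta$; taking the infimum finishes the proof (the cases $X^\delta=\{0\}$ and $X^\delta=X$ being immediate, using $\gamma_\Delta\le 1$ for the former). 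The point to get right is the chain bounding $\|u^\delta\|_{X,Y^\delta}$ --- in particular using the \emph{equality} form of the inf-sup identity restricted to $Z^\delta$ (not to $Y^\delta\times H^\delta$) followed by Galerkin orthogonality on $Z^\delta$ --- and not diluting the constant with a triangle inequality.
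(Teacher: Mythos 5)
Your proof is correct and follows essentially the same route as the paper's: identify $u^\delta$ as the Petrov--Galerkin solution with test space $Z^\delta$, use \eqref{estP} (equivalently, the $Z^\delta$-restricted inf-sup identity plus Galerkin orthogonality and enlargement to $Y\times H$) to get $\|P^\delta u\|_{X,Y^\delta}\le\|u\|_X$, combine with $\|\cdot\|_X\le\gamma_\Delta^{-1}\|\cdot\|_{X,Y^\delta}$ on $X^\delta$ to bound $\|P^\delta\|_{\cL(X,X)}\le\gamma_\Delta^{-1}$, and finish via the Kato/Xu--Zikatanov identity embedded in \eqref{stablePG1}. The only difference is that you spell out the chain of suprema that the paper compresses into the citation of \eqref{estP}.
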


\begin{proof} As we have seen in the proof of Theorem~\ref{suboptimal}, thanks to the assumptions $X^\delta \subseteq Y^\delta$ and $\ran \gamma_0|_{X^\delta}\subseteq H^\delta$, the component $u^\delta \in X^\delta$ of the Galerkin solution of \eqref{var4} is the Petrov-Galerkin solution of \eqref{exact} with test space $Z^\delta \subset Y^{\delta} \times H^{\delta}$.

Equation \eqref{estP} shows that the projector $P^\delta\colon u \mapsto u^\delta$ satisfies $\|P^\delta u\|_{X,Y^\delta} \leq \|u\|_X$.
The proof is completed by $\|\,\|_X \leq \gamma_\Delta^{-1} \|\,\|_{X,Y^\delta}$ on $X^\delta$ by assumption \eqref{infsup}, in combination with \eqref{stablePG1}.
\end{proof}

In \cite{11}, Andreev studied minimal residual Petrov-Galerkin discretizations of $\left[\begin{array}{@{}c@{}} B \\ \gamma_0\end{array}\right]u=\left[\begin{array}{@{}c@{}} g \\ \gamma_0' u_0\end{array}\right]$.
They can equivalently be interpreted as 
Galerkin discretizations of \eqref{var4} (cf. \cite{45.44}, \cite[Prop. 2.2]{35.8565}).
In view of this, Theorem~\ref{andreev} reproduces, though here with a clear-cut constant, the results from
 \cite[Thms.~3.1 \& 4.1]{11}.

\begin{remark} As was pointed out earlier in \cite{11}, for practical computations it can be attractive to modify the Galerkin discretization of \eqref{var4} by replacing ${E_Y^\delta}' A_s E_Y^\delta$ by some  $\tilde{A}_s^\delta={\raisebox{0pt}[0pt][0pt]{$\tilde{A}$}_s^\delta}' \in \Lis(Y^\delta,{Y^\delta}')$ whose inverse can be determined cheaply (a preconditioner)
\footnote{For Galerkin discretizations of \eqref{var3}, such a replacement of ${E_Y^\delta}' A_s E_Y^\delta$ by an equivalent operator will result in an inconsistent discretization.}, such that for some constants $0<c_{\mathcal N} \leq C_{\mathcal N}<\infty$,
$$
\frac{(\tilde{A}_s^\delta u)(u)}{(A_su)(u)} \in [c_{\mathcal N}^2,C_{\mathcal N}^2] \quad (\delta \in \Delta,\,u \in Y^\delta).
$$
Indeed, in that case one can solve the then explicitly available Schur complement equation with precondition CG, instead of applying the preconditioned MINRES iteration.
By redefining $Z^\delta:=\clos_{Y^\delta \times H^\delta} \ran\left[\begin{array}{@{}c@{}} (\tilde{A}_s^\delta)^{-1} {E_Y^\delta}' B \\ \gamma_0 \end{array}\right]\Big|_{X^\delta}$  in the proof of Theorem~\ref{suboptimal},
and by taking $W^\delta$ to be its orthogonal complement in $Y^\delta \times H^\delta$ with $Y^\delta$ now being equipped with inner product $(\tilde{A}_s^\delta \cdot)(\cdot)$, 
instead of \eqref{estP} we now estimate for any $\bar{u}^\delta \in X^\delta$ ,
\begin{align*}
\|u^\delta-\bar{u}^\delta\|^2_{X,Y^\delta}
& = \sup_{0 \neq (v_1,v_2) \in Y^\delta} \frac{((B(u^\delta-\bar{u}^\delta))(v_1)+\langle u^\delta(0)-\bar{u}^\delta(0),v_2\rangle)^2}{\|v_1\|_{Y}^2+\|v_2\|^2}\\
&\leq {\textstyle\frac{1}{\min(c_{\mathcal N}^2,1)}}
\sup_{0 \neq (v_1,v_2) \in Y^\delta}\frac{((B(u^\delta-\bar{u}^\delta))(v_1)+\langle u^\delta(0)-\bar{u}^\delta(0),v_2\rangle)^2}{(\tilde{A}_s^\delta v_1)(v_1)^2+\|v_2\|^2}\\
&=
{\textstyle\frac{1}{\min(c_{\mathcal N}^2,1)}}
\sup_{0 \neq (v_1,v_2) \in Z^\delta} \frac{((B(u^\delta-\bar{u}^\delta))(v_1)+\langle u^\delta(0)-\bar{u}^\delta(0),v_2\rangle)^2}{(\tilde{A}_s^\delta v_1)(v_1)^2+\|v_2\|^2}\\
&=
{\textstyle\frac{1}{\min(c_{\mathcal N}^2,1)}}
\sup_{0 \neq (v_1,v_2) \in Z^\delta} \frac{((B(u-\bar{u}^\delta))(v_1)+\langle u(0)-\bar{u}^\delta(0),v_2\rangle)^2}{(\tilde{A}_s^\delta v_1)(v_1)^2+\|v_2\|^2}\\
&\leq
{\textstyle\frac{\max(C_{\mathcal N}^2,1)}{\min(c_{\mathcal N}^2,1)}}
\sup_{0 \neq (v_1,v_2) \in Z^\delta} \frac{((B(u-\bar{u}^\delta))(v_1)+\langle u(0)-\bar{u}^\delta(0),v_2\rangle)^2}{\|v_1\|_{Y}^2+\|v_2\|^2}\\
&\leq
{\textstyle \frac{\max(C_{\mathcal N}^2,1)}{\min(c_{\mathcal N}^2,1)}} \|u-\bar{u}^\delta\|^2_{X}.
\end{align*}
Consequently, a generalization of the statement of Theorem~\ref{suboptimal} reads as
$$
\|u-u^\delta\|_{X,Y^\delta} \leq \Big(1+\sqrt{{\textstyle \frac{\max(C_{\mathcal N}^2,1)}{\min(c_{\mathcal N}^2,1)}}}\,\Big) \inf_{\bar{u}^\delta \in X^\delta} \|u-\bar{u}^\delta\|_{X},
$$
and that of Theorem~\ref{andreev} as
$$
\|u-u^\delta\|_{X} \leq \gamma_\Delta^{-1} \sqrt{{\textstyle \frac{\max(C_{\mathcal N}^2,1)}{\min(c_{\mathcal N}^2,1)}}} \inf_{\bar{u}^\delta \in X^\delta} \|u-\bar{u}^\delta\|_{X}.
$$
\end{remark}

\begin{remark} As we have seen in the previous section, under the condition that \eqref{infsup} is valid, Galerkin discretizations of \eqref{var3} yield quasi-optimal approximations.
Assuming $A=A'$, in the current section we have seen that the same holds true for Galerkin discretizations of \eqref{var4} when in addition $X^\delta \subseteq Y^\delta$ and $\ran \gamma_0|_{X^\delta}\subseteq H^\delta$.
For the latter discretization, however, a still suboptimal error bound is valid without assuming \eqref{infsup}.
This raises the question whether this is also true for Galerkin discretizations of \eqref{var3}.

As we have seen earlier, the Galerkin operator resulting from of \eqref{var3} is invertible whenever $X^\delta\neq \{0\}$.
Moreover, when equipping $X^\delta$ with the `mesh-dependent' norm $\|\,\|_{X,Y^\delta}$, by adapting the proof of Theorem~\ref{thm:main} 
one can show that the Galerkin operator is in $\Lis(Y^\delta \times X^\delta, {Y^\delta}' \times {X^\delta}')$ with both the operator and its inverse having a uniformly bounded norm.
Despite this result, we could not establish, however, a suboptimal error estimate similar to Theorem~\ref{suboptimal}.
\end{remark}

Finally in this section we comment on the implementation of the Galerkin discretization of \eqref{var4}.
This system reads as 
\be \label{e0}
\left[\begin{array}{@{}ccc@{}} {E_Y^\delta}' A_s E_Y^\delta& 0 & {E_Y^\delta}' B E_X^\delta
\\ 0 & {E_H^\delta}'  E_H^\delta& {E_H^\delta}'\gamma_0 E_X^\delta\\
{E_X^\delta}' B' E_Y^\delta& {E_X^\delta}' \gamma_0' E_H^\delta& 0\end{array}\right]
\left[\begin{array}{@{}c@{}} \mu^\delta \\ \sigma^\delta \\ u^\delta \end{array}\right]=
\left[\begin{array}{@{}c@{}} {E_Y^\delta}' g \\ {E_H^\delta}' u_0 \\ 0 \end{array}\right],
\ee
By eliminating $\sigma^\delta$, it is equivalent to
\be \label{e1}
\left[\begin{array}{@{}cc@{}} {E_Y^\delta}' A_s E_Y^\delta & {E_Y^\delta}' B E_X^\delta\\
{E_X^\delta}' B' E_Y^\delta& -{E_X^\delta}' \gamma_0' E_H^\delta \big({E_H^\delta}' E_H^\delta\big)^{-1} {E_H^\delta}'\gamma_0 E_X^\delta\end{array}\right]
\left[\begin{array}{@{}c@{}} \mu^\delta  \\ u^\delta \end{array}\right]=
\left[\begin{array}{@{}c@{}} {E_Y^\delta}' g \\ -{E_X^\delta}' \gamma_0' u_0  \end{array}\right].
\ee
The operator $E_H^\delta \big({E_H^\delta}' E_H^\delta\big)^{-1} {E_H^\delta}'$ is the $H$-orthogonal projector onto $H^\delta$.
So under the assumption that $$\ran \gamma_0|_{X^\delta}\subseteq H^\delta$$ which was made in Theorem~\ref{andreev}, it can be omitted, or equivalently, it can be pretended that $H^\delta=H$, without changing the solution $(\mu^\delta,u^\delta)$.
The implementation of the resulting system
\be \label{e2}
\left[\begin{array}{@{}cc@{}} {E_Y^\delta}' A_s E_Y^\delta & {E_Y^\delta}' B E_X^\delta\\
{E_X^\delta}' B' E_Y^\delta& -{E_X^\delta}' \gamma_0' \gamma_0 E_X^\delta\end{array}\right]
\left[\begin{array}{@{}c@{}} \mu^\delta  \\ u^\delta \end{array}\right]=
\left[\begin{array}{@{}c@{}} {E_Y^\delta}' g \\ -{E_X^\delta}' \gamma_0' u_0  \end{array}\right].
\ee
is easier, and it runs more efficiently than \eqref{e0}.

\begin{remark}
The system \eqref{e2}  can be viewed as a Galerkin discretisation of
\be \label{e3}
\left[\begin{array}{@{}cc@{}} A_s  &  B \\
 B' & - \gamma_0' \gamma_0 \end{array}\right]
\left[\begin{array}{@{}c@{}} \mu  \\ u \end{array}\right]=
\left[\begin{array}{@{}c@{}} g \\ - \gamma_0' u_0  \end{array}\right],
\ee
but for the analysis of the discretization error in $(\mu^\delta,u^\delta)$ it is still useful to view \eqref{e2} before elimination of $\sigma^\delta$, as a Galerkin discretization of \eqref{var4} 
which yielded the sharp bound on this error presented in Theorem~\ref{andreev}.
\end{remark}

\section{Realization of the uniform inf-sup stability \eqref{infsup}} \label{S4}
In Theorem~\ref{thm:main} it was shown that Galerkin discretizations of \eqref{var3} are quasi-optimal when \eqref{infsup} is valid, and in Theorem~\ref{andreev} the same was shown for Galerkin discretizations of \eqref{var4} when in addition 
$X^\delta \subseteq Y^\delta$ and $\ran \gamma_0|_{X^\delta} \subseteq H^\delta$ (and $A=A_s$) are valid.

In this section we realize the condition \eqref{infsup} for finite element spaces w.r.t.~partitions of the space-time domain into prismatic elements.
In \S\ref{Slocalglobal} generally non-uniform partitions are considered for which the partition in time is independent of the spatial location, and the spatial mesh in each time slab is such that the corresponding $H$-orthogonal projection is uniformly $V$-stable.
In \S\ref{Sglobalglobal} we revisit the special case, already studied in \cite{11}, of trial spaces that are tensor products of temporal and spatial trial spaces.

\subsection{Non-uniform approximation in space \emph{local} in time, non-uniform approximation in time \emph{global} in space} \label{Slocalglobal}
\begin{theorem} \label{infsupverification}
Let ${\mathcal O}$ be a collection of closed subspaces $X_x$ of $V$ such that the $H$-orthogonal projector $Q_{X_x}$ onto $X_x$ is in $\cL(V,V)$, with $\mu_{\mathcal O} := \inf_{X_x \in {\mathcal O}} \|Q_{X_x}\|_{\cL(V,V)}^{-1}>0$.
For any $N \in \N$, $0=t_0<t_1<\cdots<t_N=T$,  $q_0,\ldots,q_{N-1} \in \N$, $X_x^0,\ldots,X_x^{N-1} \in {\mathcal O}$, let
\begin{align*}
X^\delta &:=\{u \in C(\bar{I};V) \colon u|_{(t_i,t_{i+1})} \in P_{q_i} \otimes X_x^i\}\\
Y^\delta &:=\{v \in L_2(I;V) \colon v|_{(t_i,t_{i+1})} \in P_{q_i-1} \otimes X_x^i\}
\end{align*}
Then with $\Delta$ being the collection of all $\delta=\delta(N,(t_i)_i, (q_i)_i, (X_x^i)_i)$,  it holds that
\be \label{infsup2}
\inf_{\delta \in \Delta}\inf_{\{u \in X^\delta \colon \partial_t u \neq 0\}} \sup_{0\neq v \in Y^\delta} \frac{(\partial_t u)(v)}{\|\partial_t u\|_{Y'}\|v\|_Y}\geq \mu_{\mathcal O},
\ee
i.e.~\eqref{infsup} is valid.
\end{theorem}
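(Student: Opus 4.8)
The key observation is that everything factors through the temporal structure because the spatial discretization is the same space $X_x^i$ on each time slab $(t_i,t_{i+1})$, for both $X^\delta$ and $Y^\delta$. The plan is to reduce the inf-sup condition to a purely temporal statement on each slab and then assemble. First I would fix $u \in X^\delta$ with $\partial_t u \neq 0$. On the slab $(t_i,t_{i+1})$ we have $\frac{du}{dt}|_{(t_i,t_{i+1})} \in P_{q_i-1} \otimes X_x^i$, which lies in the \emph{same} space as the restrictions of elements of $Y^\delta$; the only obstruction to choosing $v = \frac{du}{dt}$ directly is that $Y'$ is the dual of $L_2(I;V)$, not of $L_2(I;H)$, so $\|\partial_t u\|_{Y'}$ involves the $V'$-norm in space while $v$ must be measured in the $V$-norm in space. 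This is exactly where the $H$-orthogonal projectors $Q_{X_x^i}$ and the hypothesis $\mu_{\mathcal O} > 0$ enter.

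The main step is the following: for a fixed time slab, given $w \in P_{q_i-1} \otimes X_x^i$ (playing the role of $\frac{du}{dt}$ on that slab), I want to exhibit a test function $v \in P_{q_i-1}\otimes X_x^i$ with $\int \langle w(t), v(t)\rangle \,dt \gtrsim \|w\|_{Y'(\text{slab})} \|v\|_{Y(\text{slab})}$ with constant $\mu_{\mathcal O}$. The natural candidate comes from the following duality computation: since $w(t) \in X_x^i$ for a.e.\ $t$, and $Q_{X_x^i}$ is self-adjoint on $H$ and bounded $V \to V$, for any $\phi \in L_2(\text{slab};V)$ one has $\langle w(t), \phi(t)\rangle = \langle w(t), Q_{X_x^i}\phi(t)\rangle$ with $\|Q_{X_x^i}\phi(t)\|_V \leq \mu_{\mathcal O}^{-1}\|\phi(t)\|_V$, so that the $Y'$-norm of $w$ computed against all of $L_2(\text{slab};V)$ is comparable, up to $\mu_{\mathcal O}^{-1}$, to its norm computed against $L_2(\text{slab};X_x^i)$ only. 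Concretely, letting $w^* \in L_2(\text{slab};X_x^i)$ be the Riesz representer of the functional $\phi \mapsto \int\langle w, \phi\rangle$ restricted to $L_2(\text{slab};X_x^i)$ with its $L_2(\cdot;V)$-inner product, one checks $\|w^*\|_{L_2(\text{slab};V)} \geq \mu_{\mathcal O}\, \|w\|_{Y'(\text{slab})}$ and $\int\langle w, w^*\rangle = \|w^*\|_{L_2(\text{slab};V)}^2$; since $w^*$ automatically lies in $P_{q_i-1}\otimes X_x^i$ (the temporal polynomial degree is inherited because the spatial factor $X_x^i$ is fixed across the slab), it is an admissible piece of a global test function.

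Next I would assemble: define $v \in Y^\delta$ slab-by-slab by the above construction applied to $w = \partial_t u|_{(t_i,t_{i+1})}$. Because $Y^\delta$ has no continuity constraints across time nodes (it sits in $L_2(I;V)$), these pieces glue into a legitimate global $v$. Then $\int_I \langle \partial_t u, v\rangle\,dt = \sum_i \|v|_{(t_i,t_{i+1})}\|_{L_2(\text{slab};V)}^2 = \|v\|_Y^2$ in the energy norm (or $\eqsim$ in the standard norm), while $\|\partial_t u\|_{Y'}^2 = \sum_i \|\partial_t u|_{(t_i,t_{i+1})}\|_{Y'(\text{slab})}^2 \leq \mu_{\mathcal O}^{-2}\sum_i \|v|_{(t_i,t_{i+1})}\|_{L_2(\text{slab};V)}^2 = \mu_{\mathcal O}^{-2}\|v\|_Y^2$. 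Combining, $\frac{(\partial_t u)(v)}{\|\partial_t u\|_{Y'}\|v\|_Y} = \frac{\|v\|_Y^2}{\|\partial_t u\|_{Y'}\|v\|_Y} = \frac{\|v\|_Y}{\|\partial_t u\|_{Y'}} \geq \mu_{\mathcal O}$, which is the claimed bound; note the bound is independent of $\delta$ since each slab contributes the same constant $\mu_{\mathcal O}$.

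\textbf{Anticipated obstacle.} The delicate point is making the duality argument in the second paragraph rigorous at the level of Bochner spaces: one must be careful that $\|\partial_t u\|_{Y'}$ is genuinely an $L_2(I;V')$-type norm (using that $\partial_t u \in L_2(I;V')$ for $u \in X$, via $X \hookrightarrow H^1(I;V')$), and that restricting the supremum defining this dual norm from $Y = L_2(I;V)$ to the subspace $L_2(I;X_x^i)$ on each slab costs exactly the factor $\|Q_{X_x^i}\|_{\cL(V,V)}$ — this is where the $V$-stability of the $H$-orthogonal (equivalently $L_2$-orthogonal) projector is essential and cannot be replaced by, say, $H$-stability. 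The temporal polynomial-degree bookkeeping ($P_{q_i-1}$ for $Y^\delta$ versus $P_{q_i}$ for $X^\delta$, so that $\frac{d}{dt}$ maps $P_{q_i}$ into $P_{q_i-1}$) is routine but must be tracked so that the constructed $v$ really lands in $Y^\delta$.
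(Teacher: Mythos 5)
Your proof is correct, and it takes a genuinely different route from the paper's. The paper expands $\partial_t u$ in a temporal Legendre basis, so that $\partial_t u = \sum_{i,n} P_n\bigl(\frac{2t-(t_{i+1}+t_i)}{t_{i+1}-t_i}\bigr) u_{i,n}$ with $u_{i,n}\in X_x^i$, chooses for each coefficient $u_{i,n}$ a near-optimal spatial test element $v_{i,n}\in X_x^i$ attaining the discrete inf-sup constant up to $\varepsilon$, and then exploits the $L_2(I)$-orthogonality of the Legendre polynomials to sum the contributions; an $\varepsilon\to 0$ argument closes the proof. You instead construct the optimal test function directly as the pointwise-in-$t$ Riesz lift $w^*(t)\in X_x^i$ of $w(t)=\partial_t u(t)$ with respect to the $V$-inner product on $X_x^i$. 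The observation that this lift is a linear map applied fiberwise in $t$, and hence preserves the temporal polynomial degree and lands in $P_{q_i-1}\otimes X_x^i$ automatically, replaces the Legendre expansion and removes the need for the $\varepsilon$-tightness argument. Both proofs use the same essential ingredient, namely the identity $\inf_{0\neq u\in X_x}\sup_{0\neq v\in X_x}\frac{\langle u,v\rangle}{\|u\|_{V'}\|v\|_V}=\|Q_{X_x}\|_{\cL(V,V)}^{-1}$ from \cite[Lemma~6.2]{11}; the paper cites it explicitly, while you re-derive its content inline via the self-adjointness and $V$-boundedness of $Q_{X_x^i}$ (which is fine, but it would be cleaner to simply invoke the lemma). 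In short: same key lemma, different and arguably more elegant assembly; your version makes the optimal test function concrete and avoids the limiting argument.

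One minor point worth tightening: where you write that $(\partial_t u)(v)=\|v\|_Y^2$ ``in the energy norm (or $\eqsim$ in the standard norm),'' the computation is exact for whichever $V$-norm you use consistently in the Bochner identification $Y'\simeq L_2(I;V')$ and in the discrete inf-sup identity; the hedge is unnecessary and slightly obscures that the constant $\mu_{\mathcal O}$ comes through cleanly. This imprecision about which $V$-norm is in force is present in the paper as well, so it is not a gap specific to your argument.
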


\begin{proof} In \cite[Lemma 6.2]{11} it was shown that $\inf_{0 \neq u \in X_x} \sup_{0 \neq v \in X_x} \frac{\langle u,v\rangle}{\|u\|_{V'}\|v\|_{V}}=\|Q_{X_x}\|_{\cL(V,V)}^{-1}$. 

With $P_n$ denoting the Legendre polynomial of degree $n$, extended with zero outside $(-1,1)$, for any $u \in X^\delta $, $\partial_t u$ can be written as the $L_2(I;H)$-orthogonal expansion $(t,x) \mapsto \sum_{i=0}^{N-1} \sum_{n=0}^{q_i-1} P_n\big(\frac{2t-(t_{i+1}+t_i)}{t_{i+1}-t_i}\big) u_{i,n}(x)$ for some $u_{i,n} \in X_x^i$. 
Fixing $\eps \in (0,\mu_{\mathcal O} )$, for each $(i,n)$ there is a $v_{i,n} \in X_x^i$ with $\|v_{i,n}\|_{V}=\|u_{i,n}\|_{V'}$ and $\langle u_{i,n},v_{i,n}\rangle \geq (\mu_{\mathcal O} -\eps) \|u_{i,n}\|_{V'} \|v_{i,n}\|_{V}$.
Taking $v:=(t,x) \mapsto \sum_{i=0}^{N-1} \sum_{n=0}^{q_i-1}  P_n\big(\frac{2t-(t_{i+1}+t_i)}{t_{i+1}-t_i}\big) v_{i,n}(x)$, we conclude that
$$
(\partial_t u)(v) \geq (\mu_{\mathcal O} -\eps) \sum_{i=0}^{N-1} \sum_{n=0}^{q_i-1} \big\|P_n\big({\textstyle \frac{2\cdot-(t_{i+1}+t_i)}{t_{i+1}-t_i}}\big)\big\|_{L_2(I)}^2 \|u_{i,n}\|_{V'}^2=(\mu_{\mathcal O} -\eps) \|u\|_{Y'} \|v\|_Y,
$$
which implies the result.
\end{proof}

\begin{remark}
In view of Theorem~\ref{andreev}, note that both $X^\delta \subset Y^\delta$ and \eqref{infsup} are valid by taking 
$Y^\delta :=\{v \in L_2(I;V) \colon v|_{(t_i,t_{i+1})} \in P_{q_i} \otimes X_x^i\}$.
\end{remark}

Considering the condition on the collection ${\mathcal O}$ of spatial trial spaces $X_x$, let us consider the typical situation that $H=L_2(\Omega)$, $V=H^1_{0,\gamma}(\Omega)=\{u \in H^1(\Omega)\colon u=0 \text{ on } \gamma\}$ where $\Omega \subset \R^d$ is a bounded polytopal domain, and $\gamma$ is a measurable, closed, possibly empty subset of $\partial\Omega$.
We consider $X_x \subset V$ to be finite element spaces of some degree w.r.t.~a family of uniformly shape regular, and, say, conforming partitions ${\mathcal T}$ of $\Omega$ into, say, $d$-simplices, where $\gamma$ is the union of some $(d-1)$-faces of $S \in {\mathcal T}$.
When the partitions in this family are
quasi-uniform, then using e.g.~the Scott-Zhang quasi-interpolator (\cite{247.2}), it is easy to demonstrate the
so-called (uniform) \emph{simultaneous approximation property}
$$
\sup_{X_x \in {\mathcal O}} \sup_{0 \neq u \in V} \frac{\inf_{v \in X_x}\{\|v\|_V+\big(\sup_{0\neq w \in X_x} \frac{\|w\|_V}{\|w\|_H}\big) \|u-v\|_H\}}{\|u\|_V}<\infty.
$$
Writing for $u \in V$ and any $v \in X_x$, $Qu=v+Q(u-v)$, one easily infers that
$\sup_{X_x \in {\mathcal O}} \|Q_x\|_{\cL(V,V)}<\infty$.

The uniform boundedness of $\|Q_x\|_{\cL(V,V)}$ is, however, by no means restricted to families of finite element spaces w.r.t.~quasi-uniform partitions, and it has been demonstrated for families of locally refined partitions, for $d=2$ including those that are generated by the newest vertex bisection algorithm. We refer to \cite{37.2,75.3675}. 

\subsection{Non-uniform approximation in space \emph{global} in time, non-uniform approximation in time \emph{global} in space} \label{Sglobalglobal}
If in Theorem~\ref{infsupverification}, the spatial trial spaces $X_x^i$ are independent of the temporal interval $(t_i,t_{i+1})$, then $X^\delta$ is a tensor product of trial spaces in space and time. In that case, one shows inf-sup stability for general temporal trial spaces, e.g.~spline spaces with more global smoothness than continuity.

\begin{theorem} \label{globalglobal} Let ${\mathcal O}$ be as in Theorem~\ref{infsupverification}. Given closed subspaces $X_t \subset H^1(I)$, $\frac{d}{dt} X_t \subseteq Y_t \subset L_2(I)$ and $X_x \in {\mathcal O}$, let $X^\delta:=X_t \otimes X_x$, $Y^\delta:=Y_t \otimes X_x$. Then with $\Delta$ being the collection of all $\delta=\delta(X_t,Y_t,X_x)$, \eqref{infsup2} is valid.
\end{theorem}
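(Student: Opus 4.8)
The plan is to follow the proof of Theorem~\ref{infsupverification} almost verbatim, the only change being that the role played there by the Legendre basis of a broken polynomial space in time is now taken over by an arbitrary $L_2(I)$-orthonormal basis of the general temporal test space $Y_t$. The decisive structural input is the standing hypothesis $\frac{d}{dt}X_t\subseteq Y_t$, which gives $\partial_t X^\delta\subseteq Y^\delta$ for $X^\delta=X_t\otimes X_x$, $Y^\delta=Y_t\otimes X_x$; the spatial input is, exactly as before, the identity $\inf_{0\neq w\in X_x}\sup_{0\neq\theta\in X_x}\frac{\langle w,\theta\rangle}{\|w\|_{V'}\|\theta\|_V}=\|Q_{X_x}\|_{\cL(V,V)}^{-1}\geq\mu_{\mathcal O}$ from \cite[Lemma 6.2]{11}.

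Concretely, I would fix $u\in X^\delta$ with $\partial_t u\neq 0$, pick an $L_2(I)$-orthonormal basis $\{\psi_j\}_j$ of $Y_t$, and write the expansion $\partial_t u=\sum_j\psi_j\otimes\eta_j$ (a finite sum in the usual finite-element situation) with $\eta_j\in X_x$ uniquely determined and not all zero. Fixing $\eps\in(0,\mu_{\mathcal O})$, for each $j$ the spatial inf-sup supplies a $\theta_j\in X_x$ with $\|\theta_j\|_V=\|\eta_j\|_{V'}$ and $\langle\eta_j,\theta_j\rangle\geq(\mu_{\mathcal O}-\eps)\|\eta_j\|_{V'}^2$ (one takes $\theta_j=0$ when $\eta_j=0$). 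Then $v:=\sum_j\psi_j\otimes\theta_j$ lies in $Y_t\otimes X_x=Y^\delta$, and the $L_2(I)$-orthonormality of $\{\psi_j\}$ kills all temporal cross-terms, so that $(\partial_t u)(v)=\sum_j\langle\eta_j,\theta_j\rangle\geq(\mu_{\mathcal O}-\eps)\sum_j\|\eta_j\|_{V'}^2$ while $\|\partial_t u\|_{Y'}^2=\sum_j\|\eta_j\|_{V'}^2=\sum_j\|\theta_j\|_V^2=\|v\|_Y^2$; hence $(\partial_t u)(v)\geq(\mu_{\mathcal O}-\eps)\|\partial_t u\|_{Y'}\|v\|_Y$. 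Letting $\eps\downarrow 0$ and then passing to the infima over such $u$ and over $\delta\in\Delta$ yields \eqref{infsup2}.

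I do not expect a genuine obstacle; the single point requiring a little care — and the reason the proof cannot be reduced to a pointwise-in-$t$ statement — is that a pointwise-optimal test direction $v(t)\in X_x$ need not have its temporal profile in $Y_t$, which is what forces the detour through the expansion of $\partial_t u$ in a basis of $Y_t$. Choosing that basis $L_2(I)$-orthonormal is exactly what makes the $Y'$- and $Y$-norms split into sums over $j$ of the corresponding squared spatial norms, so that \cite[Lemma 6.2]{11} can be applied componentwise, precisely as in the proof of Theorem~\ref{infsupverification}. That $Y_t$ is now permitted to be ``global'' in time — e.g.\ a spline space with more than $C^0$ smoothness — plays no role: beyond $\frac{d}{dt}X_t\subseteq Y_t$, the only feature of $Y_t$ used is that, being a separable Hilbert space, it admits an $L_2(I)$-orthonormal basis.
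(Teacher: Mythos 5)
Your proof is correct, but it takes a genuinely different route from the paper. The paper derives Theorem~\ref{globalglobal} from the exact factorization identity \eqref{factorization}: thanks to the Kronecker-product structure of $\partial_t$ on $X_t\otimes X_x$, the inf-sup constant splits as the product of a purely temporal inf-sup constant (which equals $1$ precisely because $\frac{d}{dt}X_t\subseteq Y_t$ lets one take $v=\frac{du}{dt}$) and the purely spatial inf-sup constant $\inf_{0\neq u\in X_x}\sup_{0\neq v\in X_x}\frac{\langle u,v\rangle}{\|u\|_{V'}\|v\|_V}$. The paper justifies the factorization spectrally: the inf-sup constant equals $\sqrt{\min\sigma(R_U^{-1}T'R_V^{-1}T)}$, this operator is itself a Kronecker product, and the spectrum of a Kronecker product is the set of products of the two spectra. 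You instead transplant the constructive proof of Theorem~\ref{infsupverification} wholesale: expand $\partial_t u$ in an $L_2(I)$-orthonormal basis of $Y_t$, apply the spatial inf-sup coefficientwise to build the test function $v$, and use the $L_2(I)$-orthonormality to make both the pairing $(\partial_t u)(v)$ and the norms $\|\partial_t u\|_{Y'}$, $\|v\|_Y$ split into sums over coefficients. Both arguments are valid and make identical use of the two hypotheses ($\frac{d}{dt}X_t\subseteq Y_t$ to land the test function in $Y^\delta$, and \cite[Lemma 6.2]{11} for the spatial factor). Your route buys elementariness and a visible unified treatment of Theorems~\ref{infsupverification} and~\ref{globalglobal}; the paper's route buys the stronger exact-equality form of \eqref{factorization} (not merely the lower bound $\mu_{\mathcal O}$ you obtain) and makes the tensor-product structure conceptually transparent. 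One presentational caveat common to both your argument and the paper's: the splitting of $\|\cdot\|_{Y'}$ and $\|\cdot\|_Y$ into sums of spatial $V'$/$V$ norms presupposes that the energy norm on $Y$ decouples as an $L_2(I)$-norm against a $t$-independent $V$-norm; if you want to flag your implicit use of this, it would be worth a line, but it is no more of a gap than what the paper itself leaves tacit.
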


The proof of this result follows from the fact that thanks to the Kronecker product structure of  $\partial_t \in \cL(X,Y')$, for such trial spaces we have
\begin{align} \nonumber
&\inf_{\{u \in X^\delta\colon \partial_t u \neq 0\}} \sup_{0\neq v \in Y^\delta} \frac{(\partial_t u)(v)}{\|\partial_t u\|_{Y'}\|v\|_Y}
\\ \label{factorization}
&=\inf_{\{u \in X_t\colon \frac{d u}{d t} \neq 0\}} \sup_{0\neq v \in Y_t} \frac{\int_I \frac{d u}{d t} v\,dt}{\|\frac{d u}{d t}\|_{L_2(I)}\|v\|_{L_2(I)}}
\times \inf_{0\neq u \in X_x} \sup_{0\neq v \in X_x} \frac{\langle u,v\rangle}{\|u\|_{V'}\|v\|_{V}}\\ \nonumber
& =\inf_{0\neq u \in X_x} \sup_{0\neq v \in X_x} \frac{\langle u,v\rangle}{\|u\|_{V'}\|v\|_{V}}.
\end{align}
(To see this, one may use that for Hilbert spaces $U$ and $V$, $T \in \cL(U,V')$, and Riesz mappings $R_U\colon U \rightarrow U'$, $R_V\colon V \rightarrow V'$, it holds that
$\inf_{0 \neq u \in U}\sup_{0 \neq v \in V}\frac{(Tu)(v)}{\|u\|_U\|v\|_V}=\min \sigma(R_U^{-1} T' R_V^{-1} T)$, with $R_U^{-1} T' R_V^{-1} T\in \cL(U,U)$ being self-adjoint and non-negative.
In the above setting, it is a Kronecker product of corresponding operators acting in the `time' and `space' direction, respectively.)

\begin{remark}[Sparse tensor products]
Instead of considering the `full' tensor product trial spaces from Theorem~\ref{globalglobal}, more efficient approximations can be found by the application of  `sparse' tensor products.
 Let $X_x^{(0)}\subset X_x^{(1)} \subset \cdots $ be a sequence of spaces from ${\mathcal O}$, $X_t^{(0)}\subset X_t^{(1)}\subset \cdots \subset H^1(I)$, and $Y_t^{(0)}\subset Y_t^{(1)}\subset \cdots \subset L_2(I)$ such that $Y_t^{(k)} \supseteq \frac{d}{d t}X_t^{(k)}$. Then for 
$X^{(\ell)}:=\sum_{k=0}^\ell X_t^{(k)} \otimes X_x^{(\ell-k)}$,
$Y^{(\ell)}:=\sum_{k=0}^\ell Y_t^{(k)} \otimes X_x^{(\ell-k)}$
inf-sup stability holds true uniformly in $\ell$ with inf-sup constant $\mu_{\mathcal O}$.

Although this result follows as a special case from the analysis given in \cite{11} for convenience we include the argument.
Defining $W_t^{(k)}:=Y_t^{(k)} \cap (Y_t^{(k-1)})^{\perp_{L_2(I)}}$ for $k>0$, and $W_t^{(0)}:=Y_t^{(0)}$,
from the nestings of $(Y_t^{(i)})_i$ and  $(X_x^{(i)})_i$ one infers that
$Y^{(\ell)}=\oplus_{k=0}^\ell W_t^{(k)} \otimes X_x^{(\ell-k)}$ is
an $(L_2(I)\otimes H)$-orthogonal decomposition.
Given $y \in Y^{(\ell)}$, let $y=\sum_{k=0}^\ell y_k$ be the corresponding expansion. Fixing $\eps \in (0,\mu_{\mathcal O} )$,
there exist $\tilde{y}_k \in W_t^{(k)} \otimes X_x^{(\ell-k)}$ with
$\langle y_k,\tilde{y}_k\rangle_{L_2(I)\otimes H} \geq (\mu_{\mathcal O}-\eps)\|y_k\|_Y\|\tilde{y}_k\|_{Y'}$ and $\|\tilde{y}_k\|_{Y'}=\|y_k\|_Y$, and so
$\langle \sum_{k=0}^\ell y_k,\sum_{k=0}^\ell \tilde{y}_k\rangle_{L_2(I)\otimes H} \geq
(\mu_{\mathcal O}-\eps) \|\sum_{k=0}^\ell y_k\|_Y\|\sum_{k=0}^\ell \tilde{y}_k\|_{Y'}$. 
Thanks to $\partial_t X^{(\ell)} \subseteq Y^{(\ell)}$, the proof  is completed.
\end{remark}

\begin{remark} In view of \eqref{factorization}, it is obvious that Theorem~\ref{globalglobal} remains valid when the condition $\frac{d}{dt} X_t \subseteq Y_t$ is relaxed to
$\inf_{\{u \in X_t\colon \frac{d u}{d t} \neq 0\}} \sup_{0\neq v \in Y_t} \frac{\int_I \frac{d u}{d t} v\,dt}{\|\frac{d u}{d t}\|_{L_2(I)}\|v\|_{L_2(I)}}>0$ uniformly in the pairs $(X_t,Y_t)$ that are applied.
As shown in \cite{11}, the same holds true in the sparse tensor product case.
For $X_t$ being the space of continuous piecewise linears w.r.t.~some partition $\tria$ of $I$, and $Y_t$ being the space of continuous piecewise linears w.r.t.~the once dyadically refined partition, an easy computation shows that the inf-sup constant is not less than $\sqrt{3/4}$. 

Since in our experiments with the method from \cite{11}, with this alternative choice of $Y_t$ the numerical results are slightly better than when taking $Y_t$ to be the space of discontinuous piecewise linears w.r.t.~$\tria$, we will report on results obtained with this alternative choice for $Y_t$.
\end{remark}

\section{Numerical experiments} \label{Snum}
For the simplest possible case of the heat equation in one space dimension discretized using as `primal' trial space $X^\delta$ the space of continuous piecewise bilinears w.r.t.~a uniform partition into squares, we compare the accuracy of approximations provided by the newly proposed method (i.e.~the Galerkin discretization of \eqref{var3} with trial space here denoted by $Y_{\text{new}}^\delta \times X^\delta$) with those obtained with the method from \cite{11} (i.e.~the Galerkin discretization of \eqref{var4}).
We implement the latter method in the form \eqref{e2}, i.e. after eliminating $\sigma^\delta$.
The remaining trial space is denoted here by $Y_{\text{Andr.}}^\delta \times X^\delta$.
So we take $T=1$, i.e.~$I=(0,1)$, and with $\Omega:=(0,1)$, $H:=L_2(\Omega)$, $V:=H^1_0(\Omega)$, $a(t;\eta,\zeta):=\int_\Omega \eta' \zeta'\,dx$. 
With $\frac{1}{h_t}=\frac{1}{h_x} \in \N$, we set
\begin{alignat*}{3}
X^\delta&:=&&\{v \in H^1(I)\colon v|_{(ih_t,(i+1)h_t)} \in P_1\} &&\otimes \{v \in H_0^1(\Omega)\colon v|_{(ih_x,(i+1)h_x)} \in P_1\},\\
Y_{\text{new}}^\delta&:=&&\{v \in L_2(I)\colon v|_{(ih_t,(i+1)h_t)} \in P_0\} &&\otimes \{v \in H_0^1(\Omega)\colon v|_{(ih_x,(i+1)h_x)} \in P_1\},\\
Y_{\text{Andr}}^\delta&:=&&\{v \in H^1(I)\colon v|_{(ih_t/2,(i+1)h_t/2)} \in P_1\} && \otimes\{v \in H_0^1(\Omega)\colon v|_{(ih_x,(i+1)h_x)} \in P_1\},
\end{alignat*}
Note that $\dim Y^\delta_{\text{new}} \approx \dim X^\delta$ and $\dim Y^\delta_{\text{Andr}} \approx 2\dim X^\delta$.
The total number of non-zeros in the whole system matrix of the new method is asymptotically a factor 2 smaller than this number for Andreev's method.

Prescribing both a smooth exact solution $u(t,x)=e^{-2t} \sin \pi x$ and a singular one $u(t,x)=e^{-2t} |t-x| \sin \pi x$, Figure~\ref{fig1} shows the errors $e^\delta := u - u^\delta$ in $X$-norm as a function of $\dim X^\delta$.
\begin{figure}[h]
  \begin{center}
    \includegraphics[width=0.45\linewidth]{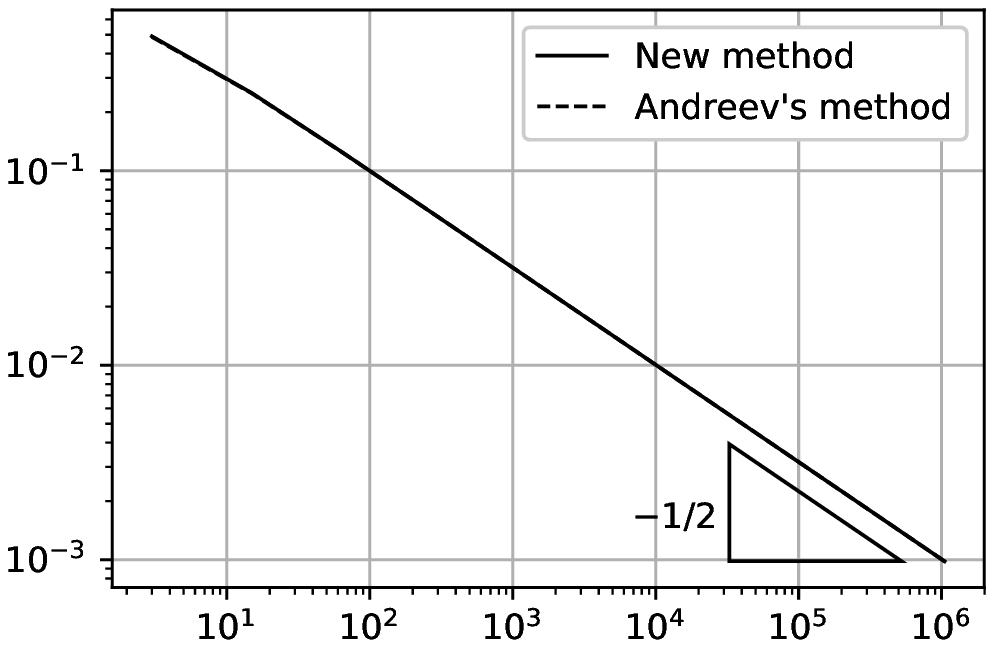}
    \includegraphics[width=0.45\linewidth]{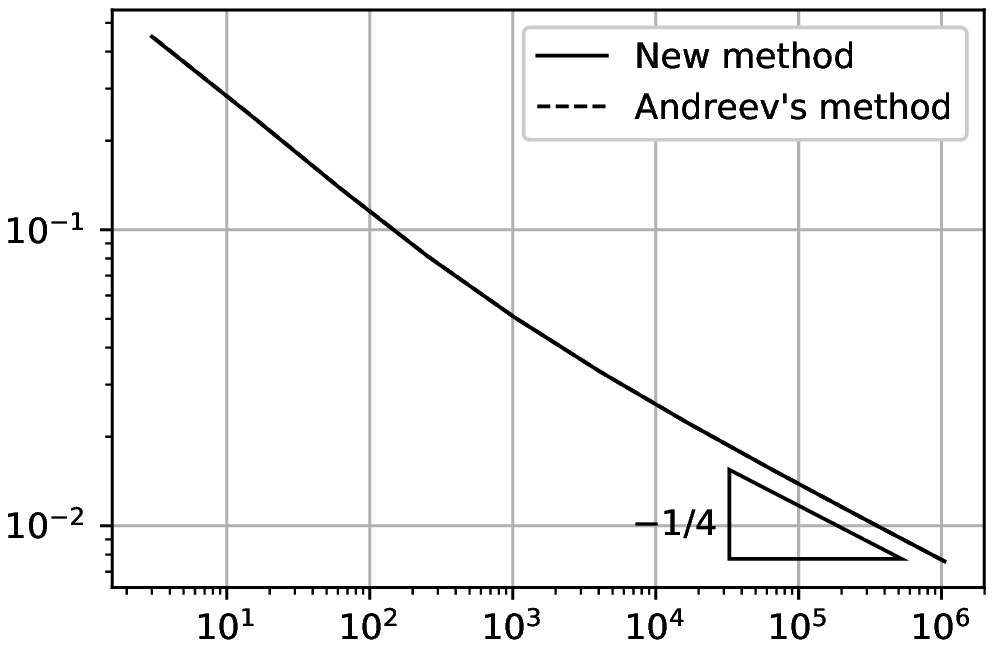}
  \end{center}
  \caption{$\|e^\delta\|_X$ vs.~$\dim X^\delta$ for both numerical methods. Left: $u(t,x)=e^{-2t} \sin \pi x$. Right: $u(t,x)=e^{-2t} |t-x| \sin \pi x$.}
  \label{fig1}
\end{figure}
The norms of the errors in the Galerkin solutions found by the two methods are nearly indistinguishable from one another.
Furthermore, the observed convergence rates $1/2$ and $1/4$, respectively, are the best possible ones that in view of the polynomial degrees of $X^\delta$ and $Y^\delta$ (new method) or that of $X^\delta$ (Andreev's method) and the regularity of the solutions can be expected with the application of uniform meshes. (For any $\eps>0$, $e^{-2t} |t-x| \sin \pi x \in H^{\frac32-\eps}(I \times \Omega) \setminus H^{\frac32}(I \times \Omega)$).

For both solutions and both numerical methods, the errors $e^\delta(T,\cdot)$ measured in $L_2(\Omega)$ converge with the better rate $1$, i.e., these errors are asymptotically proportional to $h_x^2=h_t^2$, see left picture in Figure~\ref{fig2}. To illustrate that the two methods yield different Galerkin solutions, we show $e^\delta(0, \cdot)$, measured in $L_2(\Omega)$-norm in the right of Figure~\ref{fig2}.
\begin{figure}[h]
  \begin{center}
    \includegraphics[width=0.45\linewidth]{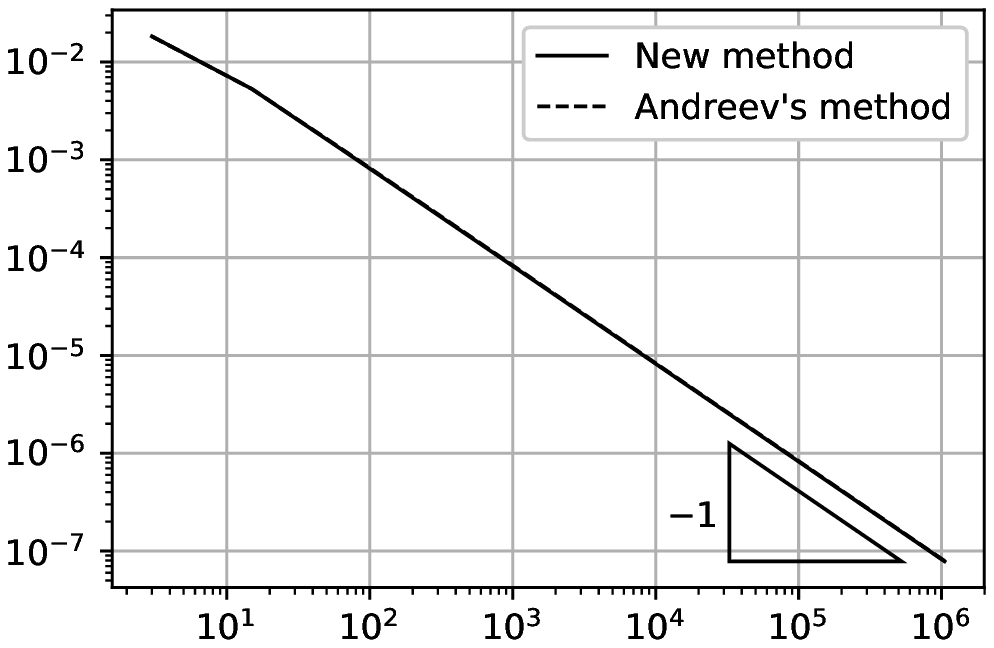}
    \includegraphics[width=0.45\linewidth]{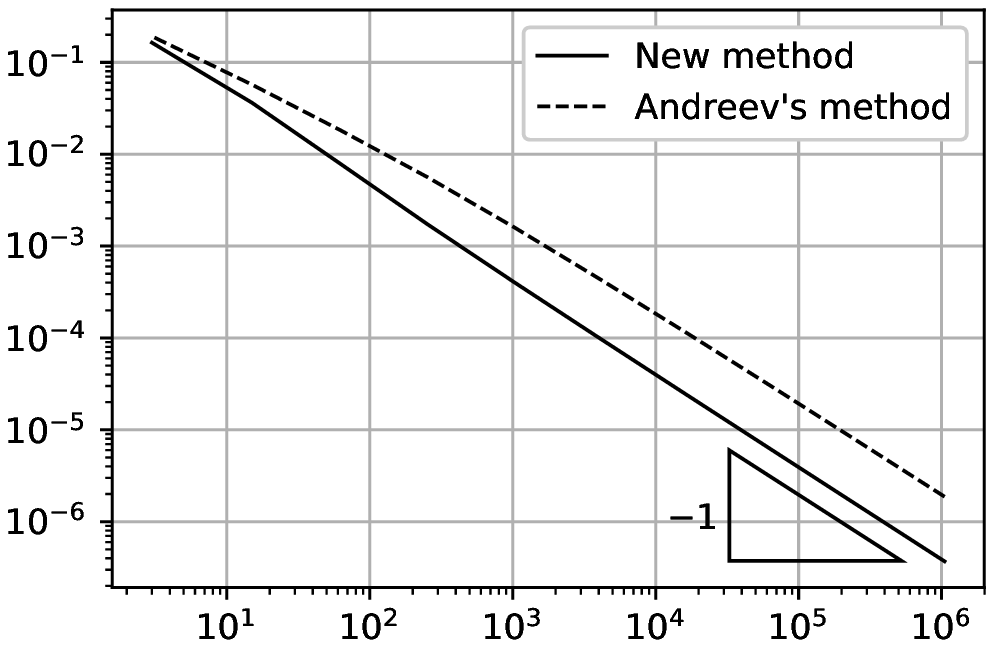}
  \end{center}
  \caption{Singular solution $u(t,x)=e^{-2t} |t-x| \sin \pi x$. Left: $\|e^\delta(T,\cdot)\|_{L_2(\Omega)}$ vs.~$\dim X^\delta$. Right: $\|e^\delta(0,\cdot)\|_{L_2(\Omega)}$ vs.~$\dim X^\delta$.}
  \label{fig2}
\end{figure}

The new method actually yields two approximations for $u$, viz.~$u^\delta$ and $\lambda^\delta$. This secondary approximation is not in $X$, but it is in $Y=L_2(I;V)$.
For both solutions, the errors in $\lambda^\delta$ measured in $Y$-norm are slightly larger than in those in $u^\delta$, see left picture in Figure~\ref{fig3}.

Finally, we replaced the symmetric spatial diffusion operator by a nonsymmetric convection-diffusion operator
$a(t;\eta,\zeta):=\int_\Omega \eta' \zeta'\,+ \beta \eta' \zeta dx$.
Letting $\beta := 100$ and again taking the singular solution $u(t,x)=e^{-2t} |t-x| \sin \pi x$, the errors $e^\delta$ in $X$-norm of both Galerkin solutions vs.~$\dim X^\delta$ are given in Figure~\ref{fig3}. We once again see that the two methods show very comparable convergence behaviour.
\begin{figure}[h]
  \begin{center}
    \includegraphics[width=0.45\linewidth]{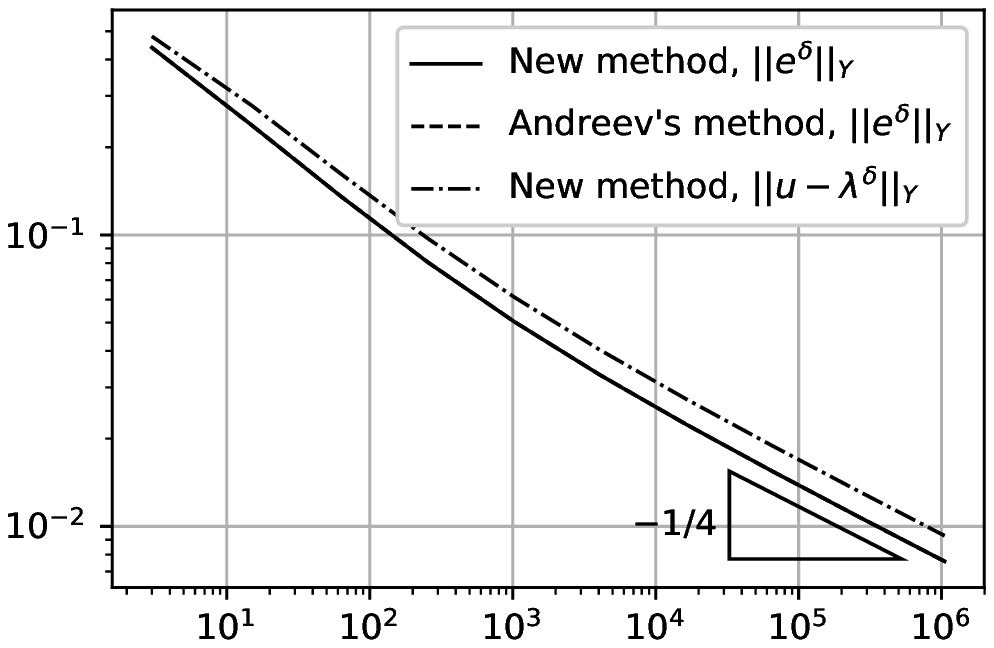}
    \includegraphics[width=0.45\linewidth]{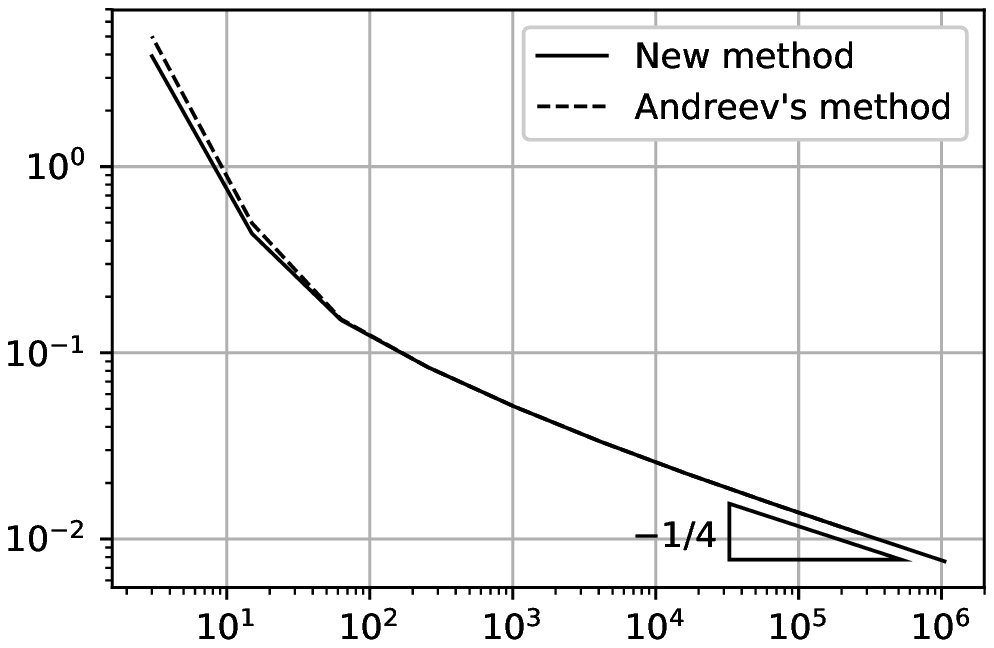}
  \end{center}
  \caption{Singular solution $u(t,x) = e^{-2t} |t-x| \sin \pi x$. Left: $\|e^\delta\|_Y$ and $\|u-\lambda^\delta\|_Y$ vs.~$\dim X^\delta$ for the symmetric problem. Right: $\|e^\delta\|_X$ vs.~$\dim X^\delta$ for the nonsymmetric problem.}
  \label{fig3}
\end{figure}

\section{Conclusion} \label{Sconcl}
Three related (Petrov-) Galerkin discretizations of space-time variational formulations were analyzed.
The Galerkin scheme introduced by Steinbach in \cite{249.2} has the lowest computational cost, and applies on general space-time meshes, but depending on the exact solution, the numerical solutions can be far from quasi-optimal in the natural mesh-independent norm.
The minimal residual Petrov-Galerkin discretization introduced by Andreev in \cite{11} yields for suitable trial and test pairs quasi-optimal approximations from the trial space.
For suitable pairs of trial spaces, Galerkin discretizations of a newly introduced mixed space-time variational formulation also yield quasi-optimal approximations, 
but for the same accuracy at a lower computational cost than with the method from \cite{11}.


\end{document}